\documentclass[11pt]{article}
\usepackage{geometry}
\usepackage{graphicx} 
\usepackage{amssymb, amsmath, color, amsfonts, amsthm,verbatim,enumerate,authblk,url}
\usepackage{graphicx, graphics, epsf, epsfig}
\usepackage{longtable,supertabular,multicol}
\usepackage{catchfile}
\usepackage{cite}
\usepackage{xurl}

\newcommand{\Z}{\mathbb Z}
\newcommand{\F}{\mathbb F}
\allowdisplaybreaks

\newtheorem{theorem}{Theorem}

\newtheorem{lemma}[theorem]{Lemma}

\newtheorem{corollary}[theorem]{Corollary}

\newtheorem{conjecture}[theorem]{Conjecture}

\newtheorem{definition}[theorem]{Definition}

\newcounter{encoding}

\newcommand{\inv}{^{-1}}
\newcommand{\srg}{\operatorname{srg}}
\newcommand{\codeg}{\operatorname{codeg}}
\begin{document}

\title{Constructions for Nearly Diagonal Book Ramsey Numbers}
\author[1,2]{Stephen Chung}
\affil[1]{DualVerseAI}
\affil[2]{University of Cambridge}
\author[3]{William J. Wesley}
\affil[3]{Discrete Mathematics Group, Institute for Basic Science (IBS), Daejeon,
South Korea\thanks{This work was supported by the Institute for Basic Science (IBS-R029-C1).}}
\maketitle

\begin{abstract}
    The \emph{Ramsey number} $R(G_1,G_2)$ is the smallest positive integer $n$ such that for every graph $H$ on $n$ vertices, either $H$ contains a copy of $G_1$ or the complement $\overline H$ contains a copy of $G_2$. Ramsey numbers involving the book graphs $B_n$, which consist of $n$ triangles sharing a common edge, have attracted recent attention, especially in the ``nearly diagonal'' case $R(B_{n-1},B_n)$. Rousseau and Sheehan \cite{RousseauSheehanRamseyBooksOriginal} showed the upper bound $R(B_{n-1},B_n) \le 4n-1$ nearly fifty years ago, but no general matching lower bound construction is known. This paper gives several new infinite families of $n$ for which $R(B_{n-1},B_n) = 4n-1$. 

    Some of the main results in this paper were found with assistance from the {\scshape Station}, a multi-agent AI system. These results also appeared in a larger work on autonomous mathematical discovery by the authors and Du \cite{chung2026autonomousmathematicaldiscoveryopenworld}. This paper is a follow-up to that work: we give complete proofs of the book Ramsey results and discuss their connections to objects in combinatorial design theory. 
\end{abstract}

\section{Introduction}

Given two graphs $G_1$ and $G_2$, the \emph{Ramsey number} $R(G_1,G_2)$ is the smallest positive integer $n$ such that for every graph $H$ on $n$ vertices, either $H$ contains a copy of $G_1$ or the complement $\overline H$ contains a copy of $G_2$. Ramsey numbers for various choices of $G_1$ and $G_2$ have been studied extensively both in terms of their exact values and asymptotics, and the dynamic survey of Radziszowski gives the best known bounds \cite{RamseySurvey}. A lower bound $R(G_1,G_2) > n$ can be shown by exhibiting a graph on $n$ vertices that avoids these copies of $G_1$ and $G_2$. We call such a graph a \emph{Ramsey} $(G_1,G_2;n)$ graph. Searches for these constructions form a significant part of the Ramsey number literature, and much computational and theoretical effort has been expended to improve various lower bounds. Random graphs give the best known asymptotic lower bounds for the classical diagonal Ramsey numbers $R(K_s,K_s)$ \cite{SpencerRamseyLB}. But for many choices of $G_1$ and $G_2$, the Ramsey $(G_1,G_2;R(G_1,G_2)-1)$ graphs have interesting structure. Often they come from constructions in other areas of mathematics and have many symmetries and other elegant algebraic or geometric properties.   

We will focus on the family of \emph{book graphs} $B_n = K_2 + \overline{K_n}$, where $+$ denotes the graph join. More simply, $B_n$ is the graph consisting of $n$ triangles that share a common edge or ``spine". The study of Ramsey numbers for book graphs traces back nearly fifty years to Rousseau and Sheehan \cite{RousseauSheehanRamseyBooksOriginal}, and book Ramsey numbers have attracted a lot of attention in the following years for their exact values, asymptotics, and quasirandomness properties \cite{WJWBooks, LidickyMcKinleyPfenderSmallBooksWheels,Conlon_BookRamsey,ConlonFoxWigderson_OffDiagonalBooks,ChenLinRamseyBookUpperBounds,NikiforovRousseauSchelpBookQuasirandomness}. 

The central problem of this work is determining the precise values of $R(B_{n-1},B_n)$. Rousseau and Sheehan \cite{RousseauSheehanRamseyBooksOriginal} proved the upper bound \begin{equation}\label{EqBookUB}
    R(B_{n-1},B_n) \le 4n-1
\end{equation} for all $n$. Constructions of Ramsey $(B_{n-1},B_n;4n-2)$ graphs to give a matching lower bound were given for $n =2,3,4,5$, respectively, by Chv\'atal and Harary in 1972 \cite{ChvatalHararyGenRamsey3}, Clancy in 1977 \cite{ClancySmallRamsey}, Shao, Xu, Bao, and Pan in 2010 \cite{ShaoXuBoPan}, and the second author in 2023 \cite{WJWThesis}.

Two improvements followed shortly when Lidick\'y, McKinley, Pfender, and Van Overberghe \cite{LidickyMcKinleyPfenderSmallBooksWheels} showed computationally that Rousseau and Sheehan's bound is tight for $n \le 21$. The second author independently matched this result and found an infinite family of $n$ for which the bound \eqref{EqBookUB} is tight \cite{WJWBooks}. This evidence led the second author to conjecture the following:

\begin{conjecture}\label{ConjBn-1Bn}
    $R(B_{n-1},B_n) = 4n-1$ for all positive integers $n$. 
\end{conjecture}

This conjecture appeared as a problem on Epoch AI's FrontierMath benchmark \cite{FrontierMathOpenProblems}. Since then, there has been progress from  \cite{TurtureanBookRamsey,ToukoUrsinPersonalComm,PriceBookRamsey, AybarBookRamsey, BhattacharjeeBookRamsey} in finding new infinite families and fixed $n$ values for which the conjecture holds. Other recent work by Dai and Lin \cite{DaiLin2026bookramseynumbersalgebraic} improved lower bounds on $R(B_{n-2},B_n)$ with algebraic constructions. 

\subsection{Main results}

Our results are all new infinite families of $n$ for which $R(B_{n-1},B_n) = 4n-1$. Some of the discoveries in this paper were made with the assistance of the {\scshape{Station}} \cite{chung2026autonomousmathematicaldiscoveryopenworld}, a multi-agent AI system. We discuss the {\scshape Station} briefly in Section \ref{SectionDiscussion}, and refer the reader to \cite{chung2026autonomousmathematicaldiscoveryopenworld} for complete descriptions and analysis of the {\scshape Station}. The theorems here were originally stated, but not proven, in \cite{chung2026autonomousmathematicaldiscoveryopenworld}. The purpose of this article is to give proofs and expound on the mathematical context and details of the problem.  

\begin{theorem} \label{Thm2blocks}
Suppose there exists a conference graph of order $q = 2n-1$. Then $R(B_{n-1},B_n) = 4n-1$. In particular, $R(B_{n-1},B_n) = 4n-1$ if $2n-1$ is a prime power congruent to $1$ modulo $4$.
\end{theorem}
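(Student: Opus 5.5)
The upper bound $R(B_{n-1},B_n)\le 4n-1$ is \eqref{EqBookUB}, so the task is to construct a Ramsey $(B_{n-1},B_n;4n-2)$ graph. This means a graph $G$ on $4n-2=2q$ vertices in which every edge lies in at most $n-2$ triangles, and every non-edge $uv$ has at most $n-1$ common non-neighbours. The second part of the theorem will follow from the first, because the Paley graph on $\F_q$ is a conference graph whenever $q\equiv 1 \pmod 4$ is a prime power.

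Let $A$ be the adjacency matrix of the conference graph $C$ of order $q=2n-1$. Then $n$ is odd, and $C$ is $\srg(q,n-1,(n-3)/2,(n-1)/2)$. Write $\bar A=J-I-A$. The strongly regular identities give
\[
A^2=(n-1)I+\tfrac{n-3}{2}A+\tfrac{n-1}{2}\bar A,
\]
and from $AJ=(n-1)J$ we get closed forms for $A\bar A=\bar A A$ and $\bar A^2$ as combinations of $I$, $A$ and $\bar A$. I would take $G$ on two copies $V_1,V_2$ of $V(C)$ with the block adjacency matrix
\[
M=\begin{pmatrix} A & A+I\\ A+I & \bar A\end{pmatrix}.
\]
So $V_1$ induces $C$, $V_2$ induces $\overline C$, and $x\in V_1$ is joined to $y\in V_2$ exactly when $x=y$ or $xy\in E(C)$. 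Every vertex then has degree $(n-1)+n=2n-1$, so $G$ is regular, and $\overline G$ is $(2n-2)$-regular.

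The verification is a computation of $M^2$ blockwise. The diagonal blocks are $A^2+(A+I)^2$ and $(A+I)^2+\bar A^2$, and the off-diagonal block is $A(A+I)+(A+I)\bar A$. Each reduces via the identities to the form $\alpha I+\beta A+\gamma\bar A$. For an edge $uv$, the codegree is the corresponding entry of $M^2$, and I need it to be at most $n-2$. For a non-edge $uv$, the number of common non-neighbours in $\overline G$ is
\[
(4n-2)-2(2n-1)+\codeg(u,v)=\codeg(u,v),
\]
since $u$ and $v$ are non-adjacent and so neither is a neighbour of itself or of the other. So I need $\codeg(u,v)\le n-1$ on non-edges. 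The pair types to check are:
\begin{itemize}
\item $u,v$ in the same part, adjacent or not;
\item $u\in V_1$, $v\in V_2$ with $u=v$ as vertices of $C$;
\item $u\in V_1$, $v\in V_2$ with $uv$ an edge or a non-edge of $C$.
\end{itemize}
Each type has a fixed codegree, read off from the coefficients $\alpha,\beta,\gamma$.

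The main obstacle is that this particular choice of cross block may miss a bound by one on some pair type, since the constants are tight. For example, with $M$ above, two adjacent vertices of $V_1$ get $\frac{n-3}{2}+\big(\frac{n-3}{2}+2\big)=n-1$, which is one too many. If that happens, I would run the same computation for the other natural candidates for the cross block, namely $A$, $\bar A$, $\bar A+I$, $A+I$, or a transposed or sign-twisted variant. I would also allow the two diagonal blocks to be $A$ and $A$, or $A$ and $\bar A$, and pick the combination whose codegrees are at most $n-2$ on every edge type and at most $n-1$ on every non-edge type. Because every quantity is linear in the three coefficients, this search is a finite check of a few cases. Once the right block pattern is fixed, the proof is just the displayed reductions of the three blocks of $M^2$, together with a table of codegrees by pair type.
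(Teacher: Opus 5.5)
Your proposal has a genuine gap: it never exhibits a graph that works. The one construction you write down, with cross block $A+I$, fails, as you correctly compute: adjacent vertices of $V_1$ have $n-1$ common neighbours. The rest is a promise to search other block patterns, without showing that any of them succeeds. A proof has to name the right pattern and verify it.

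In fact your $M$ is exactly the complement of the working graph built from the complementary conference graph $\overline C$. So it is a Ramsey $(B_n,B_{n-1};4n-2)$ graph, with the two colours the wrong way round. There is also a slip in your non-edge criterion. The count $(4n-2)-2(2n-1)+\codeg(u,v)$ includes $u$ and $v$ themselves, so for your $(2n-1)$-regular graph the number of common non-neighbours is $\codeg(u,v)-2$, not $\codeg(u,v)$. For a $d$-regular graph the count is $4n-4-2d+\codeg(u,v)$. The criterion ``$\codeg\le n-1$ on non-edges'' is therefore right only when $d=2n-2$.

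The fix is to drop the identity from the cross block. Take $M=\begin{pmatrix} A & A\\ A & \bar A\end{pmatrix}$, so $x\in V_1$ is joined to $y\in V_2$ if and only if $xy\in E(C)$. This graph is $(2n-2)$-regular. The blocks of $M^2$ are:
\begin{itemize}
\item diagonal blocks $2A^2$ and $A^2+\bar A^2=2(n-1)I+(n-2)(J-I)$;
\item off-diagonal block $A(A+\bar A)=(n-1)J-A$.
\end{itemize}
The resulting codegrees are:
\begin{itemize}
\item inside $V_1$: $n-3$ on edges and $n-1$ on non-edges;
\item inside $V_2$: $n-2$ on all pairs;
\item across the parts: $n-2$ on edges and $n-1$ on non-edges, including the pairs $x,x'$.
\end{itemize}
All of these are within the required bounds.

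This is precisely the paper's construction: its Seidel matrix is $\Sigma=\begin{pmatrix} S & S+I\\ S+I & -S\end{pmatrix}$, where $S$ is the Seidel matrix of $C$. The paper verifies it more compactly via Corollary~\ref{CorMaster}. All row sums of $\Sigma$ equal $1$, which is exactly $(2n-2)$-regularity. Moreover $\Sigma^2=I_2\otimes(2S^2+2S+I)$ is block diagonal, and its off-diagonal entries $2S_{ij}-2$ are nonpositive.
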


\begin{theorem} \label{Thm4blocks+2}
    Suppose $q = n-1$ is the order of a conference graph. Then $R(B_{n-1},B_n) = 4n-1$. In particular, $R(B_{n-1},B_n) = 4n-1$ if $n-1$ is a prime power congruent to $1$ modulo $4$. 
\end{theorem}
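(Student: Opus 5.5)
The plan is to build an explicit Ramsey $(B_{n-1},B_n;4n-2)$ graph. Combined with the upper bound \eqref{EqBookUB}, this gives $R(B_{n-1},B_n)=4n-1$. Write $q=n-1$, so $4n-2=4q+2$. Let $C$ be a conference graph on $q$ vertices, i.e.\ $\srg(q,\tfrac{q-1}{2},\tfrac{q-5}{4},\tfrac{q-1}{4})$, with adjacency matrix $A$ and complement $\bar A=J-I-A$. The strongly regular identities give
\[
A^2=\tfrac{q-1}{2}I+\tfrac{q-5}{4}A+\tfrac{q-1}{4}(J-I-A),\qquad AJ=JA=\tfrac{q-1}{2}J,
\]
and similarly for $\bar A$, $A\bar A$ and $\bar A A$. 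The prime power statement then follows from the Paley graph $P_q$ for $q\equiv 1\pmod 4$.

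\textbf{Reduction to codegree conditions.} A graph $H$ on $N$ vertices contains no $B_{n-1}$ exactly when every edge has codegree at most $n-2=q-1$. Its complement contains no $B_n$ exactly when every non-edge $uv$ has at most $n-1=q$ common non-neighbours. I will aim for a $2q$-regular graph on $N=4q+2$ vertices. For a non-edge $uv$ in such a graph, the number of common non-neighbours is
\[
N-2-2\cdot 2q+\codeg(u,v)=\codeg(u,v).
\]
So it suffices to have every edge codegree at most $q-1$ and every non-edge codegree at most $q$. Such a graph is ``nearly strongly regular'' with $\lambda\le q-1$ and $\mu\le q$.

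\textbf{The construction.} Take four blocks $V_1,\dots,V_4$, each a copy of $V(C)$, and two extra vertices $x,y$. Make $x$ adjacent to all of $V_1\cup V_2$, make $y$ adjacent to all of $V_3\cup V_4$, and leave $x\not\sim y$. This gives $x$ and $y$ degree exactly $2q$. Between and within blocks, the adjacency is a $4\times 4$ symmetric block matrix with entries from $\{A,\bar A,I,0,J\}$. These entries are chosen so that every row block sums to degree $2q-1$, the missing $1$ being supplied by $x$ or $y$. A natural first attempt mimics the Paley doubling behind Theorem \ref{Thm2blocks}: put $A$ on the diagonal blocks $V_1,V_2$ and $\bar A$ on $V_3,V_4$, with $I$ or $\bar A$ between the paired blocks $V_1$–$V_2$ and $V_3$–$V_4$, and $A$ or $\bar A$ across the pairs. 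The exact pattern (including possible twists such as $A$ versus $\bar A$ on the off-diagonal) would be fixed by the degree count and the codegree computation below.

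\textbf{Verification.} The codegree between $u\in V_i$ and $v\in V_j$ is the corresponding entry of $M^2$ plus contributions from $x$ and $y$. Here $M^2$ is expanded blockwise using the identities above, so each block of $M^2$ is a linear combination of $I$, $A$, $\bar A$ and $J$. I would then check the cases (same block, adjacent or not; paired blocks; crossing blocks; pairs involving $x$ or $y$) against the bounds $q-1$ for edges and $q$ for non-edges.

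The main obstacle is choosing the block pattern so that every case lands within the bound simultaneously. The $\pm\tfrac12$ fluctuations coming from $\lambda=\tfrac{q-5}{4}$ versus $\mu=\tfrac{q-1}{4}$ must be arranged to cancel across blocks. I expect to iterate a few candidate patterns, exploiting the $A\leftrightarrow\bar A$ symmetry of conference graphs. Pairs of the form $(x,v)$, and pairs $(u,v)$ in different halves, need separate care: they see only one of $x,y$, so their codegree involves a shifted constant that must come out to at most $q$.
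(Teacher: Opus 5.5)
Your reduction is correct. For a $2q$-regular graph on $4q+2$ vertices, requiring codegree $\le q-1$ on edges and $\le q$ on non-edges is exactly Corollary~\ref{CorMaster} in Seidel language. But the proposal stops where the proof has to begin. You never fix the block pattern, and that explicit pattern together with its verification is the entire content of the theorem.

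What you do sketch does not survive the checks. Your ``natural first attempt'' fails your own degree count: a diagonal block $A$ or $\bar A$, an $I$ or $\bar A$ block to the paired block, and $A$/$\bar A$ blocks across the pairs give block degree $\tfrac{3q-1}{2}$ or $2q-2$, never $2q-1$. Reaching $2q-1$ requires closed-neighbourhood blocks $A+I$ or $\bar A+I$. These are the Seidel blocks $S-I$ and $-S-I$, i.e.\ the $-1$ entries of the paper's matrix $C$, and they are not on your menu. The only other route is $J$/$0$ blocks, which the next paragraph rules out.

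The one structural choice you did commit to, $x\sim V_1\cup V_2$, $y\sim V_3\cup V_4$, $x\not\sim y$, fails for every $q\ge 9$ whenever all blocks lie in $\operatorname{span}\{I,A,J\}=\operatorname{span}\{I,S,J\}$. That span contains your whole menu. Let $\Sigma$ be the Seidel matrix, $M$ its restriction to the four blocks, and $z=\mathbf{1}_{V_1\cup V_2}-\mathbf{1}_{V_3\cup V_4}$.

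\begin{enumerate}[(i)]
\item Regularity gives $\Sigma\mathbf{1}=\mathbf{1}$, hence $\Sigma^2\mathbf{1}=\mathbf{1}$. So the off-diagonal entries of each row of $\Sigma^2$ sum to $-4q$, and each must be $\le 0$.
\item Since $x$ and $y$ have no common neighbour, $(\Sigma^2)_{xy}=-4q$ exhausts row $x$. This forces $(\Sigma^2)_{xv}=0$ for every block vertex $v$, i.e.\ $Mz=z$. Together with $M\mathbf{1}=\mathbf{1}$ this gives $M\mathbf{1}_{V_1\cup V_2}=\mathbf{1}_{V_1\cup V_2}$.
\item The block part of $\Sigma^2$ is $M^2+2zz^T$. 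Summing its row $u\in V_1\cup V_2$ over $V_1\cup V_2$ gives exactly $4q+1=(\Sigma^2)_{uu}$. Hence every off-diagonal entry of $\Sigma^2$ inside $V_1\cup V_2$ must vanish, and the $(V_1,V_1)$ block of $\Sigma^2$ must equal $(4q+1)I$.
\item That block is $\sum_k M_{1k}^2+2J$. Using $S^2=qI-J$, its $I$-coefficient is $q+3(q+1)=4q+3$ when every $M_{1k}$ has the form $\pm S$ or $\pm S\pm I$. Each $J$-type block lowers it by at least $q-3$, so it cannot equal $4q+1$ once $q\ge 9$.
\end{enumerate}

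The paper avoids this by letting the two neighbourhoods overlap in one block: $x\sim V_3\cup V_4$ and $y\sim V_2\cup V_4$. The four blocks then meet $\{x,y\}$ in four distinct patterns ($U$ has rows $(1,1),(1,-1),(-1,1),(-1,-1)$). As a result $(\Sigma^2)_{xy}=0$, and $UU^T$ has off-diagonal entries in $\{0,-2\}$. The upper-left block is $H\otimes S+C\otimes I$, with $C$ a specific symmetric $\pm1$ matrix with zero diagonal. Here $H$ is a symmetric Hadamard matrix, so $H^2=4I$ and $S^2$ never appears between distinct blocks. The diagonal blocks of $\Sigma^2$ then become $(4q+3)I+2S-2J$. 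Their off-diagonal entries $2S_{uv}-2\in\{0,-4\}$ absorb the slack instead of having to vanish.
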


\begin{theorem} \label{ThmDouble}
Suppose $Q > 3$ is a prime power congruent to $3$ modulo $8$, and let $q = \frac{Q-1}{2}$ and $n = q+1$. Then $R(B_{n-1}, B_{n}) = 4n-1$. 
\end{theorem}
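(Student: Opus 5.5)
The plan is to exhibit a Ramsey $(B_{n-1},B_n;4n-2)$ graph, since the upper bound \eqref{EqBookUB} then gives equality. Here $4n-2 = 4q+2 = 2Q$, so the vertex set will be two copies of $\F_Q$, namely $V = \F_Q \times \{0,1\}$. The target conditions are:
\begin{itemize}
\item every edge $uv$ of $G$ has $\codeg(u,v) \le q-1$, so that $G$ has no $B_q = B_{n-1}$;
\item every non-edge $uv$ has at most $q$ common non-neighbours, so that $\overline G$ has no $B_{q+1} = B_n$.
\end{itemize}

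Because $Q \equiv 3 \pmod 8$, the quadratic character $\chi$ of $\F_Q$ satisfies $\chi(-1) = -1$ and $\chi(2) = -1$. Let $S$ and $N = -S$ be the sets of nonzero squares and nonsquares, each of size $q$. The set $S$ is antisymmetric, so it cannot serve as a connection set inside a copy. However, it is the natural candidate for the bipartite part.

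The construction will use three sets:
\begin{itemize}
\item a symmetric set $C_0$ for the edges inside copy $0$, with $(x,0)\sim(y,0)$ iff $x-y\in C_0$;
\item a symmetric set $C_1$ for the edges inside copy $1$;
\item a cross set, with $(x,0)\sim(y,1)$ iff $y-x \in S$ (possibly adjoining $0$).
\end{itemize}
The degree count suggests $|C_0|+|S| \approx 2q$, so both $C_i$ have size about $q \pm 1$. Since $q \equiv 1 \pmod 4$, $q\pm1$ is even, which is consistent with symmetry. The first choice I would try is complementary sets $C_1 = \F_Q^*\setminus C_0$, of sizes $q+1$ and $q-1$. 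These would be cut out by a condition using $\chi(2)=-1$, for example a set of the form $\{x : \chi(x)=\chi(x+c)\}\cup\{\pm c\}$ or a union of cosets of a suitable subgroup.

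Next, by vertex-transitivity under translations, the codegree computations reduce to three types of pairs: $(0,0),(d,0)$; $(0,1),(d,1)$; and $(0,0),(d,1)$. For each type I would count common neighbours and common non-neighbours by writing the indicator of $S$ as $\tfrac12(1+\chi(x)) - \tfrac12[x=0]$. Expanding then gives sums $\sum_x \chi(x)\chi(x+d)=-1$ and similar sums. For the cross part these are exact, because $S$ is a Paley difference set: the Paley tournament is doubly regular, and every pair has exactly $(q-1)/2$ common out-neighbours. The within-copy contributions must then be arranged to complement the cross contributions, so that each edge has codegree at most $q-1$ and each non-edge has at most $q$ common non-neighbours. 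The non-edge condition is equivalent to the common-neighbour count of a non-edge being at least $\deg u + \deg v - 2q - 2 + 2$ after inclusion–exclusion, so both bounds become linear conditions on the same character sums.

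I expect the main obstacle to be the choice of $C_0$ and $C_1$ so that the mixed counts $|(C_0 + d)\cap S|$ and $|C_0\cap(S+d)|$ take the right values uniformly in $d$. These counts involve sums of $\chi$ over $C_0$ and its shifts, which are not automatically constant. This is exactly where the hypothesis $Q \equiv 3 \pmod 8$ (that is, $\chi(2) = -1$) and the exclusion of $Q=3$ should be used. I would first test candidate sets numerically for $Q = 11, 19, 43$. Then I would prove the uniformity by writing $C_0$ through $\chi$ of a quadratic expression and evaluating the resulting sums as Jacobi-type sums over quadratic polynomials, whose values are $\pm1$ or $-1$ by the standard evaluation $\sum_x \chi(ax^2+bx+c) = -\chi(a)$ when the discriminant is nonzero.
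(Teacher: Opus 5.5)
\textbf{Main gap: the graph is never constructed, and your ansatz cannot work.} The sets $C_0,C_1$ are left to a numerical search, but choosing them is the whole content of the theorem. Moreover, the ansatz fails for some $Q$ covered by the theorem, as follows.
\begin{enumerate}[(i)]
\item Sum the conditions of Lemma~\ref{LemmaCodegSeidelBound} for the cross pairs at differences $t$ and $-t$. Exactly one of these lies in $S$, since $\chi(-1)=-1$. Combining this with $\sum_u\chi(u)\chi(u+s)=-1$ for $s\neq 0$, one finds that $0$ must not be adjoined and that $C_1=\F_Q^\times\setminus C_0$. So your ``first choice'' is in fact forced.
\item With that choice every cross pair meets its bound with equality, and all remaining conditions involve $C_0$ alone.
\item Take $Q=11$, so $n=6$, which the theorem covers. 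Summing the within-copy conditions over $t$ forces $\{|C_0|,|C_1|\}=\{4,6\}$. The configuration is symmetric under swapping the copies, so call the $4$-element set $C_0$. You then need $|C_0\cap(C_0+t)|\le 1$ for every $t\notin C_0\cup\{0\}$.
\item Write $C_0=\{\pm a,\pm b\}$. The difference $a-b$ occurs twice, from $(a,b)$ and $(-b,-a)$, and $a+b$ occurs twice, from $(a,-b)$ and $(b,-a)$. Hence both must lie in $C_0$. In $\Z_{11}$ this forces one of $4a,5a,4b,5b$ to vanish, which is absurd.
\end{enumerate}
So no choice of $C_0,C_1$ works for $Q=11$.

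Two smaller errors. First, your concrete candidate $\{x:\chi(x)=\chi(x+c)\}\cup\{\pm c\}$ is not symmetric: its negative is the corresponding set for $-c$. Second, your inclusion--exclusion is reversed. For a non-edge $uv$ the number of common non-neighbours is $4q-\deg u-\deg v+\codeg(u,v)$. The requirement is therefore the upper bound $\codeg(u,v)\le\deg u+\deg v-3q$, not a lower bound.

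\textbf{The missing idea.} The paper does not work over $(\F_Q,+)$. It indexes by the cyclic multiplicative group $K=(\F_Q^\times)^2\cong\Z_q$ and uses the Szekeres Legendre pair $x(t)=\chi(1+t)$, $y(t)=\chi(1-t)$, $y(1)=1$, of Definition~\ref{Defxy}. Let $X,Y$ be the corresponding $K$-circulants, and set $T=\begin{pmatrix}X+I&Y\\Y^T&-X-I\end{pmatrix}$. Then:
\begin{enumerate}[(i)]
\item $T$ is symmetric by Lemma~\ref{Lemmaxyproperties}(iv).
\item $T$ has zero diagonal exactly because $x(1)=\chi(2)=-1$. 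This is where $Q\equiv 3\pmod 8$ is used.
\item The Legendre-pair identity gives $T^2=(2q+3)I-2I_2\otimes(J-X)$.
\end{enumerate}
The paper then doubles $T$ to $U=\begin{pmatrix}T&T+R\\T+R&-T\end{pmatrix}$ with $R=\operatorname{diag}(-I_q,I_q)$, and borders $U$ by two extra vertices via $L$ and $V$. It checks that the resulting $(4q+2)\times(4q+2)$ Seidel matrix has all row sums $1$ and a square that is nonpositive off the diagonal. Corollary~\ref{CorMaster} then finishes the proof.
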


\begin{theorem} \label{ThmYP}
Suppose $r \ge 7$ is a prime power congruent to $3$ modulo $4$. Let $m = (r-1)/2$, $q = mr$, and $n = (q+1)/2$. Then $R(B_{n-1}, B_n) = 4n-1$.  
\end{theorem}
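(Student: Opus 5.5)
The plan is to exhibit an explicit Ramsey $(B_{n-1},B_n;4n-2)$ graph $G$ on $4n-2 = 2q$ vertices; together with the Rousseau--Sheehan bound \eqref{EqBookUB} this gives equality. First I translate the two forbidden configurations into codegree conditions. $G$ has no $B_{n-1}$ iff every edge $uv$ has $\codeg(u,v)\le n-2$. $\overline G$ has no $B_n$ iff every non-edge $uv$ has at most $n-1$ common non-neighbours, and that count equals $2q-2-\deg u-\deg v+\codeg(u,v)$. Since $2q = 4n-2$, if $G$ is $(2n-2)$-regular (equivalently $(q-1)$-regular) both conditions read $\codeg(u,v)\le n-2$ for edges and $\codeg(u,v)\le n-1$ for non-edges. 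So the target is a $(q-1)$-regular graph on $2q$ vertices whose codegrees are at most about $q/2$, with the exact thresholds $(q-3)/2$ on edges and $(q-1)/2$ on non-edges. These are the parameters of a ``half conference graph'', and I expect the construction to be a two-block analogue of the one used for Theorem~\ref{Thm2blocks}.

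For the construction, work over $\F_r$ with quadratic character $\chi$. Because $r\equiv 3 \pmod 4$, $-1$ is a nonsquare, so $S=\{x:\chi(x)=1\}$ has $|S|=m$ and $S\cap(-S)=\emptyset$. This gives the skew Paley tournament, the natural source of the $\pm$ asymmetry needed to split $2q$ vertices into two blocks of size $q=mr$. I take $V = \{0,1\}\times S\times \F_r$, so each block $\{i\}\times S\times\F_r$ has $mr=q$ vertices, and write the adjacency matrix as
\[ M=\begin{pmatrix} A & C\\ C^{T} & D\end{pmatrix}, \]
where $A, C, D$ are $\F_r$-circulant in the second coordinate. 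Adjacency between $(s,x)$ and $(t,y)$ is decided by $\chi$ evaluated at a linear form in $x-y$, $s$, $t$ (for example $\chi\bigl(s(x-y)\bigr)$ or $\chi(x-y+s-t)$, with the sign on the off-diagonal block twisted by $\chi(-1)=-1$). The free choices are fixed so that $A$ and $D$ are symmetric and row sums total $q-1$.

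The verification computes $M^2$ blockwise, i.e.\ $A^2+CC^T$, $AC+CD$ and $C^TC+D^2$, and shows each entry is $\tfrac{q}{2}+O(1)$, with a specific constant depending only on whether the pair is an edge and which blocks it lies in. Each entry is a sum over $w\in S\times\F_r$ of products of indicators $\tfrac12(1+\chi(\cdot))$. Expanding these products gives sums $\sum_{z}\chi(z+a)\chi(z+b)$, which equal $-1$ for $a\ne b$, and sums of $\chi$ over cosets of $S$, which are controlled because $S$ is the index-$2$ subgroup. After summing over $s,t\in S$, the fluctuating parts should cancel exactly, leaving the main term $\tfrac14 \cdot 2q$ plus small corrections. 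This is where $m=(r-1)/2$ matters: it makes $S\times\F_r$ balanced, so the corrections come out as exact integers.

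The main obstacle is this codegree computation, specifically choosing the linear forms so that the off-diagonal contribution $CC^T$ cancels the $\pm r$ bias of $A^2$ on the diagonal blocks. The naive choice gives codegrees like $q/2\pm r/2$, which violates the edge bound. I would first try the ansatz $A = D$ equal to the lexicographic-type product $J_m\otimes P$, with $P$ the Paley-tournament-symmetrised matrix, and $C = I_m\otimes P' + (J_m-I_m)\otimes P''$. I would then solve the resulting small system of linear equations in the character-sum constants for the exact codegree values, checking small cases ($r=7,11$) by hand to fix signs. The hypothesis $r\ge 7$ presumably excludes $r=3$, where $m=1$ degenerates, and I would confirm this once the formulas are in hand.
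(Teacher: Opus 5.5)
\textbf{There is a genuine gap: the proposal never fixes a graph, and the concrete choices it puts forward fail.}

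Your reduction is correct: it is Corollary \ref{CorMaster}, asking for a $(q-1)$-regular graph on $2q$ vertices with codegree at most $(q-3)/2$ on edges and at most $(q-1)/2$ on non-edges. The overall shape is also the right one: two blocks of size $q$ indexed by $K\times\F_r$ with $K=(\F_r^\times)^2$, translation-invariant in $\F_r$.

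\emph{The ansatz $A=D=J_m\otimes P$ creates twins.} It blows each vertex of $P$ up into an independent set of size $m$. For fixed $x$, the vertices $(s,x)$, $s\in S$, of a block are pairwise non-adjacent (since $P_{xx}=0$) and have the same $k$ neighbours inside the block. With $C=I_m\otimes P'+(J_m-I_m)\otimes P''$, the rows of $(s,x)$ and $(t,x)$ in $C$ agree outside the two layers $u\in\{s,t\}$. So their neighbourhoods differ in at most $2r$ vertices, and their codegree is at least $q-1-r$. This exceeds the non-edge bound $(q-1)/2$ as soon as $m\ge 3$, i.e.\ for every $r\ge 7$.

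No other $C$ rescues this in your $(q-1)$-regular setting. Average the non-edge condition over these $\binom{m}{2}$ pairs and apply convexity to their cross-neighbourhoods, all of size $c=q-1-k$. This gives $(m-1)(q-1)\le 2mc(1-c/q)\le mq/2$, which is false for $m\ge 3$. So $r\ge 7$ is exactly the range where your ansatz breaks, not a guard against a degenerate case.

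\emph{The linear-form recipe cannot give graphs.} Since $\chi(-1)=-1$, $\chi$ of a linear form such as $\chi(s(x-y))$ or $\chi(x-y+s-t)$ changes sign when the endpoints are swapped. It therefore defines a tournament, not a symmetric diagonal block.

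\emph{What is missing} is a diagonal block that couples the two coordinates through an expression even under the swap, plus an identity that cancels the leftover autocorrelations. In the paper, identify $(i_1,x)\in\Z_m\times\F_r$ with $(s,x)$, where $s=g^{i_1}\in K$.
\begin{itemize}
\item The first diagonal block joins $(s,x)$ and $(s',y)$ iff $s\ne s'$ and either $x=y$ or $\chi\bigl((s-s')(y-x)\bigr)=1$. The second block uses $-1$ instead. This is the paper's condition $a\neq 0$ and ($b=0$ or $h_a\chi(b)=\pm1$), with $h_a=\chi(1-g^a)$; it is symmetric because it is a product of two odd factors.
\item The cross block is $C=\{(a,b): bg^{-a}\in D\}$, where $D=\{\pm k: k\in K,\ \chi(1+k)=-1\}$.
\end{itemize}
Thus $h$ and $D$ come from the two halves $y$ and $x$ of Szekeres' Legendre pair (Definition \ref{Defxy}).

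In $M^2$, the entries of $P_A^2$ and $P_B^2$ at differences $(a,0)$ and $(a,b)$ carry the autocorrelation $H(a)$ of $h$. The term $P_CP_C^T$ carries $\Delta(a)=|D\cap g^aD|$. Each depends on $a$ in general, and at the $(a,b)$ differences the bound $(q-1)/2$ is attained with equality, so there is no slack. The argument closes only because $H(a)+2\Delta(a)=m$ (Lemma \ref{LemmaABCDsizes}(v)), which is the Legendre-pair condition of Lemma \ref{Lemmaxyproperties}(iii) in disguise.

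Character-sum expansions of indicators $\tfrac12(1+\chi(\cdot))$ via Lemma \ref{LemmaQuadraticCharSum} are the right tool for proving such identities. They close the argument only once the blocks are chosen so that the leftover terms are exactly the two halves of a Legendre pair. That choice is what your proposal lacks.
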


\begin{figure} 
\begin{center}
    \includegraphics[width = 0.4\textwidth]{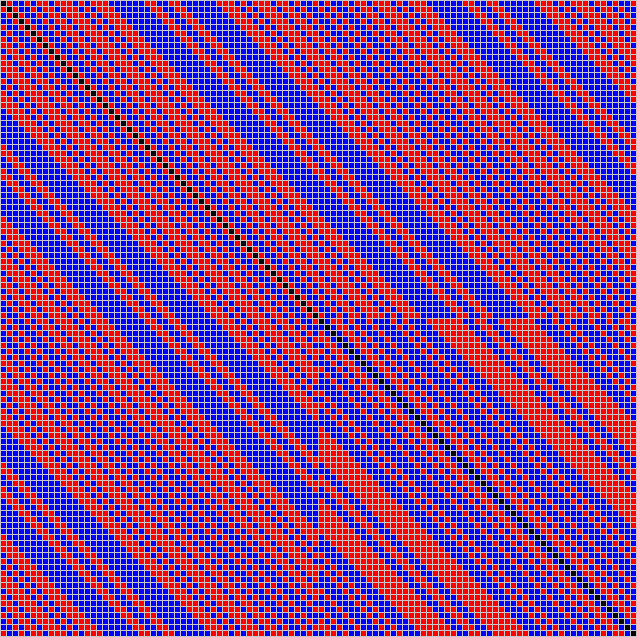}
    \hspace{1em}
    \includegraphics[width = 0.4\textwidth]{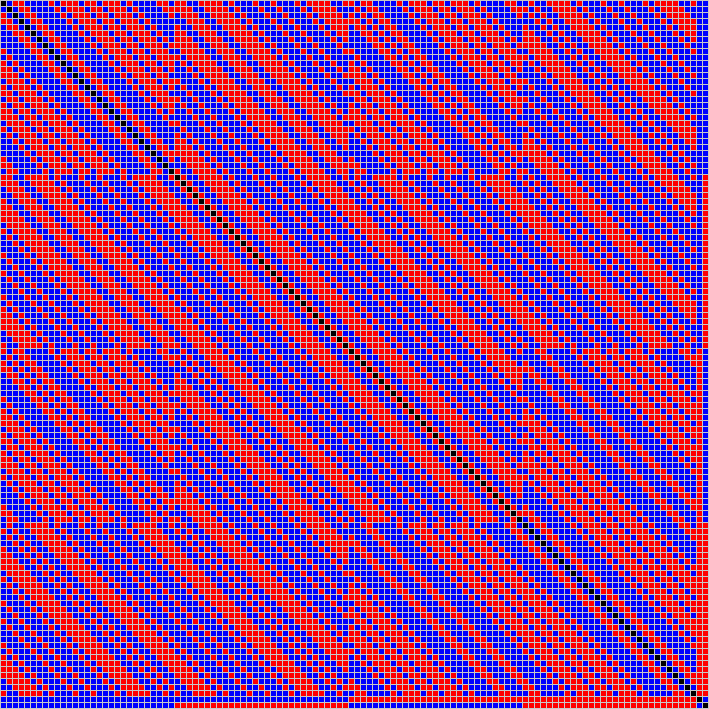}
\end{center}
\begin{center} 
    \includegraphics[width = 0.4\textwidth]{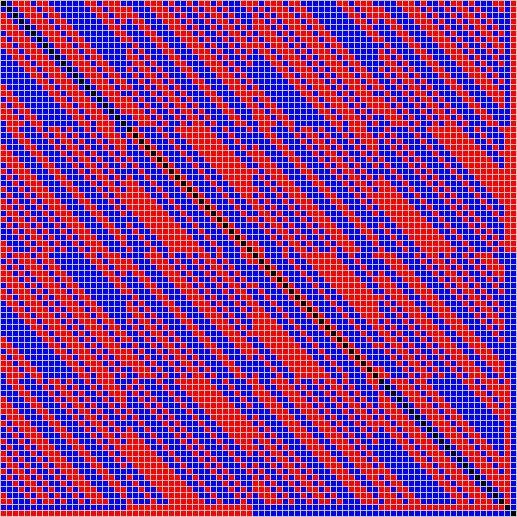}
    \hspace{1em}
    \includegraphics[width = 0.4\textwidth]{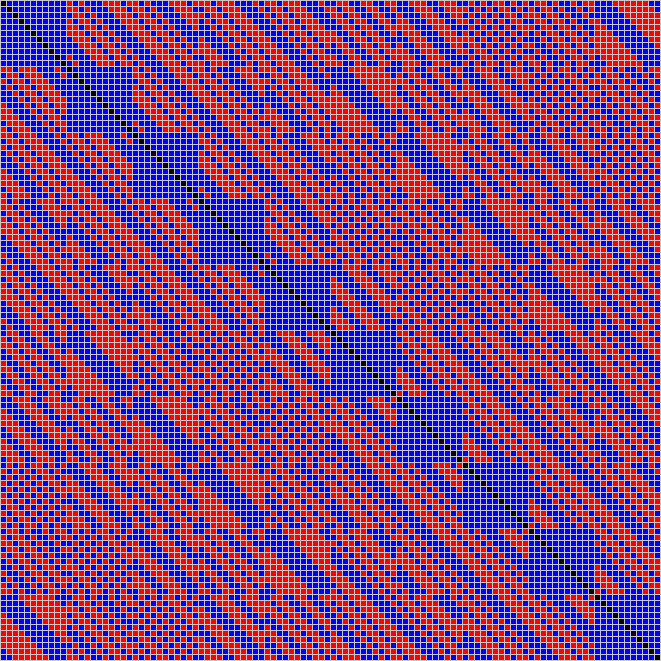}
\end{center}
\caption{From top left to bottom right, adjacency matrices for constructions of Theorem \ref{Thm2blocks} ($n = 27$), Theorem \ref{Thm4blocks+2} ($n = 30)$, Theorem \ref{ThmDouble} ($n = 22)$, and Theorem \ref{ThmYP} ($n = 28$). Entries corresponding to present edges are colored red, and entries corresponding to absent edges are colored blue. Diagonal entries are colored black.}
\label{FigureAdj}
\end{figure}

These results give 31 new exact values of $R(B_{n-1},B_n)$ with $n \le 200$. For each theorem, we give an explicit family of Ramsey $(B_{n-1},B_n;4n-2)$ graphs. Combined with the Rousseau-Sheehan bound \eqref{EqBookUB}, this proves $R(B_{n-1},B_n) = 4n-1$ for each family of $n$. All of the families come from structured graphs that are constructed with objects used in number theory and design theory. This structure is visualized in Figure \ref{FigureAdj}. Theorem \ref{Thm2blocks} is a direct generalization of a result by the second author \cite{WJWBooks} that was originally shown only for Paley graphs. The proof of Theorem \ref{Thm4blocks+2} gives a new ``4-block" lower bound construction involving the Kronecker product of a Hadamard matrix and the Seidel matrix of a conference graph. The constructions for Theorems \ref{ThmDouble} and \ref{ThmYP} come from a particular Legendre pair due to Szekeres \cite{SzekeresTournamentHadamard}. This pair was also used in a different infinite construction for Conjecture \ref{ConjBn-1Bn} found in \cite{AybarBookRamsey} and \cite{TurtureanBookRamsey}. A special case of Theorem 4, $n = 70$, was found independently by Price \cite{PriceBookRamsey}.

We define and give background on these objects in Section \ref{SectionPrelims}. In Section \ref{SectionProofs}, we give the proofs of all main results. We conclude with a discussion on future research directions and the methodology behind finding our results in Section \ref{SectionDiscussion}.

\section{Strongly regular graphs and Legendre pairs}\label{SectionPrelims}
In this section, we introduce strongly regular graphs and Legendre pairs. Both of these are interesting objects of study on their own and have connections to graph and design theory. 

A graph $G$ is \emph{strongly regular} if it is $k$-regular, pairs of adjacent vertices share exactly $\lambda$ common neighbors, and pairs of nonadjacent vertices share exactly $\mu$ common neighbors. We say such a graph on $v$ vertices with these parameters is an $\srg(v,k,\lambda,\mu)$. Strongly regular graphs have remarkable properties and appear in group theory, geometry, and Ramsey theory. A comprehensive monograph on the subject by Brouwer and Van Maldeghem can be found in \cite{BrouwerVanMaldeghemSRGs}. 


Throughout this paper we will let $I = I_n$ be the $n \times n$ identity matrix and $J = J_n$ be the $n \times n$ all-ones matrix. We let $1_n$ denote the $n\times 1$ all-ones matrix. We will omit the subscript when the dimensions of the matrix are clear. It is straightforward to show that the adjacency matrix $A$ of an $\srg(v,k,\lambda,\mu)$ satisfies 
\begin{equation}A^2 = kI + \lambda A + \mu(J-I-A) \label{EqSRGAdjSquared}
\end{equation}using the fact that the number of common neighbors of $i$ and $j$ is $(A^2)_{ij}$. This illustrates the connection between strongly regular graphs and Ramsey graphs for books: an $\srg(v,k,\lambda,\mu)$ contains no $B_{\lambda + 1}$. It also follows  
that if $G$ is an $\srg(v,k,\lambda,\mu)$, then $\overline{G}$ is an $\srg(v,v-k-1,v-2-2k+\mu,v-2k+\lambda$). Therefore a graph with these parameters is a Ramsey $(B_{\lambda+1},B_{v-1-2k+\mu};v)$ graph. Remarkably, several large exact Ramsey numbers for books, such as $R(B_{38},B_{82}) = 244$ and $R(B_{62},B_{65}) = 256$, have strongly regular lower bound witnesses \cite{RousseauSheehanRamseyBooksOriginal,Nikiforov_Rousseau_books} (see also \cite{RamseySurvey}, Table IXb). This phenomenon is not unique to book graphs, and strongly regular graphs often provide good or tight lower bounds for Ramsey numbers involving other graphs. For example, $R(K_{2,n},K_{2,n}) = 4n-2$ if and only if there exists an $\srg(4n-3,2n-2,n-2,n-1)$ \cite{ExooHeikoMengersen_K2n}. Other tight lower bounds for the ``nearly classical" Ramsey numbers $R(K_4-e,K_7-e) = R(K_4-e,K_7) = 28$, and $R(K_5-e,K_7-e) = 65$ come from strongly regular graphs \cite{BlockCircRamseyGoedVanOver, WJW_J4K7}. 

There are many known restrictions on parameters for which strongly regular graphs exist, and many cases are still unresolved \cite{BrouwerSRGparameters}. The \emph{conference graphs} are a special class of strongly regular graphs.
\begin{definition}
    A \emph{conference graph} is an $\srg(v,\frac{v-1}{2},\frac{v-5}{4},\frac{v-1}{4})$ (where $v \equiv 1 \pmod 4$). 
\end{definition}
The \emph{Paley graphs} $P_q$ are perhaps the most famous family of conference graphs with vertex set $\F_q$, where $q \equiv 1 \pmod 4$ is a prime power. Vertices are adjacent if their difference is in $(\F_q^\times)^2$. The Paley graphs $P_5$ and $P_{17}$ give tight lower bounds for the classical Ramsey numbers $R(3,3) = 6$ and $R(4,4) = 18$. However, conference graphs are a richer class of graphs: there exist conference graphs that are not Paley graphs \cite{PeisertSelfComplementarySymmetricGraphs}. Additionally, there are conference graphs of orders for which no Paley graph exists, so Theorem \ref{Thm2blocks} gives a strictly larger family than the one proven in \cite{WJWBooks}. For example, Mathon found a conference graph with 45 vertices in 1978 \cite{MathonConferenceMatrices}, Gritsenko found one with 65 vertices in 2021 \cite{gritsenko2021_conference65}, and Verma recently found one with 85 vertices \cite{verma2026symmetricconferencematrixorder}. If a conference graph on $v$ vertices exists, then a conference graph on $v^r$ vertices exists for all $r \ge 1$ \cite{Turyn_1971}, and there are other infinite families of conference graphs constructed in, for example, \cite{SeberryWhiteman_Conference} and \cite{PolhillConference}. 

Our proofs are most cleanly stated using Seidel matrices. The \emph{Seidel matrix} $S$ of a graph $G$ has zero entries on the diagonal, $S_{ij} = -1$ if $\{i,j\}$ is an edge of $G$, and $S_{ij} = 1$ otherwise. It follows that $S = J-I-2A$, where $A$ is the adjacency matrix of $G$. There is no inherent reason we need to work with $S$ rather than $A$, but this will simplify some calculations.  

The \emph{codegree} $\codeg_G(i,j)$ of a pair of distinct vertices $i,j$ in a graph $G$ is the number of common neighbors of $i$ and $j$. The maximum $r$ such that $B_r$ is a subgraph of $G$ is the maximum codegree among pairs of adjacent vertices in $G$. We begin with a simple lemma that relates the codegrees of pairs of vertices of a graph $G$ to the Seidel matrix of $G$. 
\begin{lemma} \label{LemmaCodegSeidelBound}
    Let $G$ be a graph on $v$ vertices with adjacency matrix $A$ and Seidel matrix $S$, and let $\sigma_r$ be the sum of row $r$ in $S$. If $i$ and $j$ are adjacent in $G$, then 
    \begin{align*}
        \codeg_G(i,j) &= \frac 14 \left(v-4 -\sigma_i-\sigma_j + (S^2)_{ij}  \right).
    \end{align*}
    If $i$ and $j$ are adjacent in $\overline{G}$, then 
    \begin{align*}
        \codeg_{\overline{G}}(i,j) & = \frac 14 \left(v - 4 + \sigma_i+\sigma_j  + (S^2)_{ij}  \right).
    \end{align*} 
\end{lemma}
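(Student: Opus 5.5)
The plan is to express the Seidel quantity $(S^2)_{ij}$ in terms of adjacency counts and solve for the codegree. Fix distinct vertices $i,j$. For every $k\ne i,j$, classify $k$ by its adjacency to $i$ and to $j$. Let $a$ be the number of common neighbors ($\codeg_G(i,j)$), $b$ the number adjacent to $i$ only, $c$ the number adjacent to $j$ only, and $d$ the number adjacent to neither. These four counts satisfy $a+b+c+d=v-2$.

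Since $S$ has zero diagonal, we have $(S^2)_{ij}=\sum_{k\ne i,j} S_{ik}S_{kj}$. The product $S_{ik}S_{kj}$ equals $+1$ when $k$ is in classes $a$ or $d$, and $-1$ when $k$ is in classes $b$ or $c$. Hence
\[(S^2)_{ij}=a+d-b-c.\]

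Next, compute the row sums. Row $i$ of $S$ has a $-1$ for each neighbor of $i$ and a $+1$ for each non-neighbor, with the entry at $j$ included. Suppose $i\sim j$ in $G$. Then $\sigma_i = (c+d)-(a+b)-1$ and $\sigma_j=(b+d)-(a+c)-1$, where the extra $-1$ in each comes from the edge $ij$. Adding gives $\sigma_i+\sigma_j = 2d-2a-2$.

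Now combine the three relations:
\[(S^2)_{ij}-\sigma_i-\sigma_j=(a+d-b-c)-2d+2a+2=3a-b-c-d+2.\]
Substituting $b+c+d=v-2-a$ gives $4a-v+4$. Solving, $a=\frac14(v-4-\sigma_i-\sigma_j+(S^2)_{ij})$, which is the first formula.

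For the second formula, I see two routes.

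\emph{Via the complement.} The Seidel matrix of $\overline G$ is $-S$. This leaves $S^2$ unchanged and negates each $\sigma_r$. Applying the first formula to $\overline G$ then yields the second directly.

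\emph{Directly.} If $i\not\sim j$ in $G$, then $\codeg_{\overline G}(i,j)=d$ and each row sum gets $+1$ from the non-edge $ij$. So $\sigma_i+\sigma_j=2d-2a+2$, and the identity
\[(S^2)_{ij}+\sigma_i+\sigma_j = 3d-a-b-c+2=4d-v+4\]
gives the claim.

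There is no real obstacle here. The only place needing care is the sign of the $S_{ij}$ term in the row sums, which produces the constant $-4$.
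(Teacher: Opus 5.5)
Your proof is correct and is essentially the paper's computation. The paper writes $A_{ik}=\frac12(1-S_{ik})$ for $k\ne i$ and expands $\frac14\sum_{k\ne i,j}(1-S_{ik})(1-S_{kj})=\frac14\bigl(v-2-\sigma_i-\sigma_j+2S_{ij}+(S^2)_{ij}\bigr)$, which is your four-class count in algebraic form; the $2S_{ij}$ term plays the role of your correction for the edge $ij$. For the second formula the paper repeats the expansion with $(1+S_{ik})(1+S_{kj})$, so your observation that $\overline G$ has Seidel matrix $-S$ is a slightly cleaner shortcut.
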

\begin{proof}
     For the first claim, we have
    \begin{align*}
    \codeg_{G}(i,j) = (A^2)_{ij} &= \sum_{k=1}^v A_{ik}A_{kj} \\ &= \frac 14 \sum_{k \in \{1,\dots,v\}\setminus \{i,j\}} (1-S_{ik})(1-S_{kj}) \\
    &= \frac 14 \left(v-2 -\sigma_i-\sigma_j +2S_{ij} + (S^2)_{ij}  \right)
    \\&= \frac 14 \left(v-4 -\sigma_i-\sigma_j + (S^2)_{ij} \right).
    \end{align*}

    Let $B$ be the adjacency matrix of $\overline{G}$. Similarly, we have 
    \begin{align*}
    \codeg_{\overline{G}}(i,j) = (B^2)_{ij} &= \sum_{k=1}^v B_{ik}B_{kj} \\ &= \frac 14 \sum_{k \in \{1,\dots,v\}\setminus \{i,j\}} (1+S_{ik})(1+S_{kj}) \\
    &= \frac 14 \left(v-2 +\sigma_i+\sigma_j -2S_{ij} + (S^2)_{ij}  \right)\\
    &= \frac 14 \left(v-4 +\sigma_i+\sigma_j + (S^2)_{ij}  \right).
    \end{align*}
\end{proof}

The following corollary is immediate from Lemma \ref{LemmaCodegSeidelBound}, and we will apply it for all of our theorems. 

\begin{corollary}\label{CorMaster}
    If a graph $G$ on $4n-2$ vertices has Seidel matrix $S$ whose row sums are all 1 and satisfies $(S^2)_{ij} \le 0$ for all $i \neq j$, then $G$ is a Ramsey $(B_{n-1},B_n;4n-2)$ graph and $R(B_{n-1},B_n) \ge 4n-1$.
\end{corollary}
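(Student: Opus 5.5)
The plan is to substitute the hypotheses directly into the two formulas of Lemma \ref{LemmaCodegSeidelBound}. We take $v = 4n-2$, use $\sigma_r = 1$ for every row $r$, and use $(S^2)_{ij} \le 0$ for $i \neq j$. Since the maximum $r$ with $B_r \subseteq G$ is the maximum codegree over adjacent pairs, it suffices to bound codegrees in $G$ and in $\overline G$ separately.

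For an edge $ij$ of $G$, the first formula gives
\[
\codeg_G(i,j) = \tfrac14\left(4n-6 - 2 + (S^2)_{ij}\right) \le \tfrac14(4n-8) = n-2,
\]
so $G$ contains no $B_{n-1}$. For an edge $ij$ of $\overline G$, the second formula gives
\[
\codeg_{\overline G}(i,j) = \tfrac14\left(4n-6+2+(S^2)_{ij}\right) \le \tfrac14(4n-4) = n-1,
\]
so $\overline G$ contains no $B_n$. Hence $G$ is a Ramsey $(B_{n-1},B_n;4n-2)$ graph. By the definition of the Ramsey number, a graph on $4n-2$ vertices avoiding both configurations shows $R(B_{n-1},B_n) > 4n-2$, that is, $R(B_{n-1},B_n) \ge 4n-1$.

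No step here is a real obstacle; the only care needed is keeping the signs straight. Row sums equal to $1$ lower the codegree in $G$ and raise it in $\overline G$, and this asymmetry is exactly the one-unit difference between $B_{n-1}$ and $B_n$.
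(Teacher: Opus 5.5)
Your proposal is correct and is essentially the paper's argument: the paper calls the corollary immediate from Lemma~\ref{LemmaCodegSeidelBound}. Your substitution $v=4n-2$, $\sigma_i+\sigma_j=2$, $(S^2)_{ij}\le 0$ gives the codegree bounds $n-2$ on edges of $G$ and $n-1$ on edges of $\overline G$, which exclude $B_{n-1}$ from $G$ and $B_n$ from $\overline G$.
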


The Seidel matrices of conference graphs also have particularly nice properties which follow from \eqref{EqSRGAdjSquared}.

\begin{lemma} \label{LemmaConfSeidelFormula}
    Let $G$ be a conference graph on $v$ vertices, and let $S$ be its Seidel matrix. Then 
    \begin{enumerate}[(i)]
        \item $(S^2)_{ij} = \begin{cases}
        v-1, & \text{ if } i = j, \\
        -1, & \text{ if }i \neq j. 
        \end{cases}$
        \item The sum of the entries in any row of $S$ is 0. 
        \end{enumerate}
\end{lemma}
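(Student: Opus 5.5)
The plan is to express $S$ in terms of the adjacency matrix $A$ via $S = J - I - 2A$, and then use the strongly regular identity \eqref{EqSRGAdjSquared} with the conference parameters $k = \frac{v-1}{2}$, $\lambda = \frac{v-5}{4}$, $\mu = \frac{v-1}{4}$. Part (ii) is the easier one, so I will do it first. Since $G$ is $k$-regular, row $r$ of $S$ has $k$ entries equal to $-1$ (the neighbours of $r$) and $v-1-k$ entries equal to $+1$, with a zero on the diagonal. Its row sum is therefore $(v-1-k) - k = v-1-2k = 0$, because $k = \frac{v-1}{2}$.

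For part (i) I will expand
\[
S^2 = (J-I)^2 - 2A(J-I) - 2(J-I)A + 4A^2 .
\]
The identities needed are $(J-I)^2 = (v-2)J + I$ and $AJ = JA = kJ$, which holds because $G$ is $k$-regular. These give $A(J-I) = (J-I)A = kJ - A$. Substituting $A^2 = kI + \lambda A + \mu(J-I-A)$ yields
\[
S^2 = (v-2)J + I - 4kJ + 4A + 4(k-\mu)I + 4(\lambda-\mu)A + 4\mu J .
\]
With the conference parameters we have $\lambda - \mu = -1$, so the $A$ terms cancel. The coefficient of $J$ is $v - 2 - 4k + 4\mu = v - 2 - 2(v-1) + (v-1) = -1$. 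The coefficient of $I$ is $1 + 4k - 4\mu = 1 + 2(v-1) - (v-1) = v$. Hence $S^2 = vI - J$, which has diagonal entries $v-1$ and off-diagonal entries $-1$, exactly as claimed.

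As a sanity check, the diagonal entry also follows directly: $(S^2)_{ii} = \sum_k S_{ik}^2$ counts the $v-1$ nonzero entries in row $i$. Equivalently, one can verify the off-diagonal entries entrywise from Lemma \ref{LemmaCodegSeidelBound}. With $\sigma_i = 0$ by (ii), that lemma gives $(S^2)_{ij} = 4\lambda - v + 4 = -1$ for adjacent pairs and $(S^2)_{ij} = 4\mu - v + 4 - 2 \cdot 0 \cdot \ldots$; more precisely, applied to $\overline{G}$ it gives $(S^2)_{ij} = 4\codeg_{\overline{G}}(i,j) - v + 4$. Here $\codeg_{\overline{G}}(i,j) = v - 2 - 2k + \lambda = \frac{v-5}{4}$ for non-adjacent pairs, which gives $-1$ again.

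There is no real obstacle in this argument. The only care needed is bookkeeping of the $I$ and $J$ coefficients, and the key point is that $\lambda - \mu = -1$ makes the $A$ term vanish.
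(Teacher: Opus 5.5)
Your proof is correct and follows the route the paper indicates: substitute \eqref{EqSRGAdjSquared} into $(J-I-2A)^2$ to get $S^2 = vI - J$, and use $k=\frac{v-1}{2}$ for the zero row sums. The only slip is in your optional sanity check. There, the number of common neighbours in $\overline{G}$ of a non-adjacent pair should read $v-2-2k+\mu$, not $v-2-2k+\lambda$ (the value $\frac{v-5}{4}$ you then use is correct), and the stray fragment before ``more precisely'' should be deleted.
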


Another important tool for our constructions is the notion of a \emph{Legendre pair}. A Legendre pair is a pair of functions $x:\Z_\ell \to \{-1,1\}$, $y: \Z_\ell \to \{-1,1\}$ such that

\begin{align*}
    &&\sum_{i=0}^{\ell-1} (x(i)x(i+t)+y(i)y(i+t)) = -2 \text{ for all $t \in \Z_{\ell} \setminus \{0\}.$}
\end{align*}

The term \emph{Legendre pair} was introduced by Fletcher, Gysin, and Seberry in 2001 \cite{FletcherGysinSeberryLegendrePairs}, but an equivalent formulation was studied as early as 1969 by Szekeres \cite{SzekeresTournamentHadamard}. Legendre pairs can be used to construct \emph{Hadamard matrices}, which are square matrices with $\pm1$ entries whose rows and columns are mutually orthogonal. An $n \times n$ Hadamard matrix $H$ thus satisfies $HH^T = nI$. The construction is as follows: Let $x$ and $y$ be a Legendre pair with $\ell$ odd such that $$\sum_{t\in \Z_\ell} x(t) = \sum_{t\in \Z_\ell} y(t) =1.$$  Define $X$ and $Y$ to be circulant matrices given by $X_{ij} = x(j-i)$, $Y_{ij} = y(j-i)$. Then the Legendre pair property gives $XX^T + YY^T = (2\ell+2)I -2J$, and this can be used to show that the $(2\ell+2) \times (2\ell+2)$ matrix 

$$ H = \begin{pmatrix}
    X & Y & 1_\ell & 1_\ell \\
    Y^T & -X^T & 1_\ell & -1_\ell\\
    1_\ell^T & 1_\ell^T &-1 & -1 \\
    1_\ell^T & -1_\ell^T & -1 & 1
\end{pmatrix} $$
is a Hadamard matrix. In particular, we note that $H$ is ``nearly" a Seidel matrix that gives useful bounds in Lemma \ref{LemmaCodegSeidelBound}. Several of our results will use a similar construction pattern, but with some modifications. Constructions using two circulant blocks were also used to construct many of the small $n$ Ramsey graphs in \cite{WJWBooks} and \cite{LidickyMcKinleyPfenderSmallBooksWheels}.

The Legendre pairs in our constructions are built using multiplicative characters of finite fields. Given a finite field $\F_q$ of odd order, the \emph{quadratic character} $\chi : \F_q^\times \to \{-1,1\}$ is given by $\chi(x) = x^{\frac{q-1}{2}}$. We will extend this character to all of $\F_q$ by taking $\chi(0) = 0$. Note that when $q$ is prime, we recover the Legendre symbol: $\chi(a) = \big( \frac{a}{q}\big)$. It is well known that the law of quadratic reciprocity and its supplements generalize to odd prime powers $q$, so that in particular, we have 
   $\chi(-1) = 1$ \text{if and only if $q \equiv 1 \pmod 4$} and 
   $\chi(2) = 1$ \text{if and only if $q \equiv 1,7 \pmod 8$.} 
We will use both these facts in our proofs. Moreover, we require the following results on character sums (see \cite{LidlNiederreiterFiniteFields}, Theorems 5.4 and 5.48 for proofs).

\begin{lemma}\label{LemmaQuadraticCharSum}
    Let $\F_q$ be a finite field with $q$ an odd prime power, and let $\chi$ be the quadratic character of $\F_q$. Then 

    \begin{enumerate}[(i)]
    \item $\sum_{t \in \F_q} \chi(t) = 0.$
    
    \item $\sum_{t\in \F_q} \chi(at^2+bt +c) = -\chi(a)$ if $b^2-4ac \neq 0$. 

    \end{enumerate}
\end{lemma}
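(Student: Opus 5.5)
The lemma is classical and the paper cites Lidl--Niederreiter for it. Still, I would give a short self-contained argument.

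For part (i), I would use that $\F_q^\times$ is cyclic of even order $q-1$. With $g$ a generator, $\chi(g^k) = (-1)^k$, since $g^{(q-1)/2} = -1$. So exactly half of the nonzero elements are squares and half are non-squares, and $\sum_{t\in\F_q^\times}\chi(t)=0$. Adding $\chi(0)=0$ gives (i).

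For part (ii), I would first reduce to a sum of the form $\sum_t \chi(t^2 - d)$. Since $q$ is odd, $2$ and $a$ are invertible (with $a\neq 0$ implicitly). Complete the square:
\[
4a(at^2+bt+c) = (2at+b)^2 - (b^2-4ac).
\]
Because $\chi$ is multiplicative, $\chi(at^2+bt+c) = \chi(4a)^{-1}\chi\big((2at+b)^2 - D\big)$ with $D = b^2-4ac \neq 0$. Here $\chi(4a)^{-1} = \chi(a)$, since $\chi(4)=1$ and $\chi(a)=\pm1$. As $t$ ranges over $\F_q$, so does $u = 2at+b$. Hence the sum equals $\chi(a)\sum_{u}\chi(u^2-D)$, and it remains to show $\sum_u \chi(u^2-D) = -1$ for $D\neq 0$.

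For this last sum I would count solutions. Use the identity $\#\{s : s^2 = w\} = 1+\chi(w)$, valid for every $w$ including $0$. This gives
\[
\sum_u \chi(u^2-D) = \sum_u \big(\#\{s: s^2=u^2-D\} - 1\big) = N - q,
\]
where $N$ is the number of pairs $(u,s)$ with $u^2 - s^2 = D$. Substitute $\alpha = u-s$ and $\beta = u+s$, which is invertible since $2\neq 0$. The equation becomes $\alpha\beta = D$, which has exactly $q-1$ solutions because $D\neq 0$. So $N = q-1$, the sum is $-1$, and (ii) follows.

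The only delicate step is this final point count. In particular, the identity $1+\chi(w)$ must correctly handle $w=0$, which is where the convention $\chi(0)=0$ is used. Everything else is routine bookkeeping.
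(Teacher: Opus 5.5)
Your proof is correct. The paper gives no proof of this lemma; it only cites Lidl--Niederreiter (Theorems 5.4 and 5.48), so your argument is a self-contained substitute rather than a reconstruction of anything in the paper.

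For (i), you use the cyclicity of $\F_q^\times$ to see that $\chi$ takes the values $\pm 1$ equally often. The cited Theorem 5.4 is instead the general orthogonality relation for a nontrivial character of any finite abelian group: multiply the sum by $\chi(h)\neq 1$ and reindex. That argument needs no generator.

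Your generator description also does extra work that you should state explicitly. The paper defines $\chi(x)=x^{(q-1)/2}$, not as a square indicator, so the identity $\#\{s: s^2=w\}=1+\chi(w)$ in (ii) rests on Euler's criterion. That criterion follows from your observation that $g^k$ is a square exactly when $k$ is even, i.e.\ exactly when $\chi(g^k)=1$.

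For (ii), completing the square and then counting the $q-1$ points of $\alpha\beta=D$ is a clean, fully elementary route to $\sum_u \chi(u^2-D)=-1$. The citation buys brevity, and it also covers the degenerate case $b^2-4ac=0$, which the paper never needs.

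Finally, the paper's statement omits the hypothesis $a\neq 0$, and you need not assume it. If $a=0$, then $b^2-4ac\neq 0$ forces $b\neq 0$. Then $\sum_t \chi(bt+c)=\sum_u\chi(u)=0=-\chi(0)$ by part (i), so the formula holds in that case too.
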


\section{Proofs of Main Results}\label{SectionProofs}

Here we give the details of our constructions. For each, we will produce an infinite family of Ramsey $(B_{n-1},B_n;4n-2)$ graphs that satisfy the conditions of Corollary \ref{CorMaster}. The calculations in this section are somewhat tedious and rely on various properties of the objects discussed in Section \ref{SectionPrelims}. The reader who is interested only in verifying correctness can find Lean certificates for the constructions at \url{https://github.com/dualverse-ai/station_data_v2/blob/main/artifacts/book_ramsey/verification.ipynb}. 

\subsection{Theorem \ref{Thm2blocks}}
We now prove Theorem \ref{Thm2blocks}, which is the simplest construction. This generalizes the infinite family from \cite{WJWBooks} and simplifies the proof given there. 

\begin{proof}[Proof of Theorem \ref{Thm2blocks}]
    Let $S$ be the Seidel matrix of a conference graph on $2n-1$ vertices. Let $$H = \begin{pmatrix}
    1 & 1 \\
    1 & -1
\end{pmatrix},\quad C = \begin{pmatrix}
0 & 1 \\
1 & 0
\end{pmatrix}. $$
We claim that $$\Sigma := H \otimes S + C\otimes I = \begin{pmatrix}
    S & S+I\\
    S+I & -S
\end{pmatrix}$$ is the Seidel matrix of a Ramsey $(B_{n-1},B_n;4n-2)$ graph $G$. By Lemma \ref{LemmaConfSeidelFormula}, we have that every row sum of $S$ is 0, so every row sum of $\Sigma$ is 1. Observe that \begin{align*}
    \Sigma^2 &= \begin{pmatrix}
    2S^2 + 2S +I& 0\\
    0 & 2S^2 + 2S+I
    \end{pmatrix}.
\end{align*}
By Lemma \ref{LemmaConfSeidelFormula}, the nondiagonal entries of $S^2$ are equal to -1, and it follows that the nondiagonal entries of $\Sigma^2$ are at most 0. Then by Corollary \ref{CorMaster}, we are done.

\end{proof}

\subsection{Theorem \ref{Thm4blocks+2}}

The proof of Theorem \ref{Thm4blocks+2} is similar to the preceding proof, but involves a slightly more complicated construction. 

\begin{proof}[Proof of Theorem \ref{Thm4blocks+2}]
Let $q = n-1$, and let $S$ be the Seidel matrix of a conference graph on $q$ vertices. Let
    \begin{align*}
    H &= \begin{pmatrix}
        -1 & 1  & -1 & 1 \\
        1  & 1 & 1  & 1 \\
        -1 & 1 & 1 & -1  \\
        1 & 1 & -1 & -1
    \end{pmatrix}, \quad 
    &&C = \begin{pmatrix}
        0 & -1 & -1 & 1 \\
        -1 & 0 & 1 & 1 \\
        -1 & 1 & 0 & 1 \\
        1 & 1 & 1 & 0
    \end{pmatrix}, \\
    U &= \begin{pmatrix}
        1 & 1 \\
        1 & -1 \\
        -1 & 1 \\
        -1 & -1
    \end{pmatrix},
    &&V = U \otimes 1_q, \\
    W &= \begin{pmatrix}
        0 & 1 \\
        1 & 0 
    \end{pmatrix}.
    \end{align*}
    
    We claim that the matrix $$\Sigma :=
    \begin{pmatrix}
        H \otimes S + C \otimes I & V\\
        V^T & W
    \end{pmatrix}$$
    is the Seidel matrix of a Ramsey $(B_{n-1},B_n;4n-2)$ graph $G$.

We first check that the nondiagonal entries of $\Sigma^2$ are nonpositive. A direct calculation applying Lemma \ref{LemmaConfSeidelFormula} shows that 
\begin{align*}
\Sigma^2 
&=\begin{pmatrix}
    4I \otimes S^2 + (HC  + CH)\otimes S + C^2 \otimes I +VV^T & (CU +UW)\otimes 1_q \\
    (CU +UW)^T\otimes 1_q^T& V^TV + W^2 \\
\end{pmatrix}.\\
\end{align*}

We check each of the four blocks: the upper left block is $$\begin{pmatrix}4S^2 + 2S + 3I +2J & 0 & 0 & -2S-2I -2J\\
    0 & 4S^2 + 2S+3I +2J & 2S + 2I -2J& 0 \\
    0 & 2S+2I-2J & 4S^2 + 2S + 3I +2J & 0 \\
    -2S-2I -2J & 0 & 0 &4S^2+2S+3I + 2J
    \end{pmatrix},$$
noting the fact that $H$ is a symmetric Hadamard matrix. All nondiagonal entries of $S^2$ are -1, so all nondiagonal entries in this block are nonpositive.

The upper right block simplifies to $(CU + UW)\otimes 1_q$
using properties of Kronecker products and the fact that $S1_q$ is the zero vector. The latter follows from the definition of conference graphs. We have $$CU+UW = \begin{pmatrix}
    0 & 0 \\
    -4 & 0 \\
    0 & -4 \\
    0 & 0 
\end{pmatrix},$$ so every entry in the upper right block is nonpositive. Since $\Sigma^2$ is symmetric, this follows for the lower left block as well. 

The lower right block simplifies to $(4q+1)I_2$.  Therefore all nondiagonal entries in $\Sigma^2$ are nonpositive. 

Next, we claim that all row sums in $\Sigma$ are 1. This can be seen from the fact that every row sum of $H \otimes S$ is 0 by definition of conference graph and then from adding corresponding rows of $C$ and $U$ and corresponding columns of $U$ and $W$. Corollary \ref{CorMaster} concludes the proof. 

\end{proof}

\subsection{Theorem \ref{ThmDouble}}

The proofs of Theorems \ref{ThmDouble} and \ref{ThmYP} rely on a particular Legendre pair defined in terms of quadratic characters below. 

\begin{definition}\label{Defxy}
Let $Q>3, Q \equiv 3 \pmod 4$ be an odd prime power, and let $\F_Q$ be the finite field of order $Q$. Let $\chi$ be the quadratic character of $\F_Q$, and let $K = (\F_Q^\times)^2$ be the group of nonzero squares in $\F_Q$. 

    Define functions $x,y: K \to \{\pm 1\}$ by
     $x(t) = \chi(1+t)$ for $t \in K$, $y(t) = \chi(1-t)$ for $t \in K \setminus \{1\}$, $y(1) = 1$.
\end{definition}

This pair was first studied by Szekeres \cite{SzekeresTournamentHadamard} to construct difference sets and tournaments with particular properties, and it later appeared in other design theoretic contexts in \cite{ArasuBulutogluHollonLegendreGArray, DingDiffFamilies, YamadaDiffSet}. Lemma \ref{LemmaQuadraticCharSum} is used to prove several properties of $x$ and $y$ which are crucial in the proofs of Theorems \ref{ThmDouble} and \ref{ThmYP}. Parts of the proof of Lemma \ref{Lemmaxyproperties} are essentially the same as results from \cite{SzekeresTournamentHadamard} and \cite{DingDiffFamilies}, but we give the full proof here for self-containment. 

\begin{lemma}\label{Lemmaxyproperties}
     With $x$ and $y$ as in Definition \ref{Defxy}, we have

     \begin{enumerate}[(i)]
         \item $\sum_{t\in K} x(t) = -1$ 
         \item $\sum_{t\in K} y(t) = 1$
         \item $\sum_{t\in K} (x(t)x(\delta t)+y(t)y(\delta t)) = -2$ for all $\delta \in K\setminus \{1\}$
         \item $x(t) = x(t \inv)$ for all $t \in K$
         \item $y(t) = -y(t\inv)$ for all $t \in K\setminus \{1\}$
     \end{enumerate}
        
\end{lemma}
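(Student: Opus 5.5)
Throughout, write $K$ for the nonzero squares, so $|K| = (Q-1)/2$. Since $Q \equiv 3 \pmod 4$, we have $\chi(-1) = -1$, and $\chi(t)=1$ for every $t \in K$. Each item is a character-sum computation. The plan is to extend sums over $K$ to sums over all of $\F_Q$ using the identity
\[
\sum_{t\in K} f(t) = \tfrac12 \sum_{s\in \F_Q^\times} f(s^2),
\]
or equivalently by using the weight $\tfrac12(1+\chi(t))$ on $\F_Q^\times$, and then to apply Lemma \ref{LemmaQuadraticCharSum}.

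The symmetry properties (iv) and (v) come first, since they are immediate. For (iv), $x(t\inv) = \chi(1+t\inv) = \chi(t\inv)\chi(t+1) = \chi(1+t)$, because $\chi(t\inv) = 1$. For (v), with $t \ne 1$, $y(t\inv) = \chi(1-t\inv) = \chi(t\inv)\chi(t-1) = \chi(-1)\chi(1-t) = -y(t)$.

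For (i), write $\sum_{t\in K}\chi(1+t) = \tfrac12\sum_{s\neq 0}\chi(1+s^2)$. Lemma \ref{LemmaQuadraticCharSum}(ii), with discriminant $-4 \ne 0$, gives $\sum_{s\in\F_Q}\chi(s^2+1) = -1$. Removing the term $s=0$, which equals $1$, leaves $-2$, so the sum in (i) is $-1$. For (ii), the same computation gives $\sum_{s}\chi(1-s^2) = -\chi(-1) = 1$. Removing $s=0$ leaves $0$. The terms $s=\pm1$ contribute $\chi(0)=0$, so $\sum_{t\in K\setminus\{1\}}\chi(1-t) = 0$. Adding $y(1)=1$ gives (ii).

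Item (iii) is the main work. Set $T(\delta) = \sum_{t\in K} \chi(1+t)\chi(1+\delta t)$, and define $U(\delta)$ similarly with minus signs, corrected for the convention $y(1)=1$. Since $\chi(t)$ and $\chi(\delta)$ are $1$, I will write
\[
T(\delta) = \tfrac12\sum_{t\ne0}(1+\chi(t))\,\chi\bigl((1+t)(1+\delta t)\bigr).
\]
The unweighted part is a quadratic sum $\sum_t \chi(\delta t^2 + (1+\delta)t + 1)$ with discriminant $(1-\delta)^2 \neq 0$, so it equals $-\chi(\delta) = -1$ (after handling the $t=0$ term). The weighted part $\sum_t \chi\bigl(t(1+t)(1+\delta t)\bigr)$ is a cubic character sum, which I cannot evaluate directly.

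The key step, which I expect to be the main obstacle, is to show that these cubic contributions cancel between the $x$ and $y$ parts. Under the substitution $t \mapsto -t$, the $y$-sum $\sum_t \chi(t)\chi(1-t)\chi(1-\delta t)$ becomes $\chi(-1)\sum_t \chi(t)\chi(1+t)\chi(1+\delta t)$, which is the negative of the $x$ cubic sum. So in $T+U$ the cubic terms cancel, and only the quadratic sums remain, each equal to $-1$.

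The remaining work is careful bookkeeping of the exceptional points $t=0$, $t=1$, $t=\delta\inv$ and $t = -1$, where some factor vanishes or the convention $y(1)=1$ intervenes. In particular, the terms involving $y(1)$ and $y(\delta\inv)$ must be added back explicitly. I expect these corrections to produce exactly the constant $-2$, and I will verify this count at the end.
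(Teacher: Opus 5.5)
Your proposal follows the paper's proof essentially step for step. Items (iv) and (v) use the same one-line manipulations. Items (i) and (ii) come from Lemma \ref{LemmaQuadraticCharSum}(ii); you parametrize $K$ by $s^2$ where the paper uses the weight $\tfrac12(1+\chi(t))$, which is equivalent. Item (iii) uses the same weighting, the same evaluation of the two quadratic sums, and the same cancellation of the cubic sums under $t\mapsto -t$ via $\chi(-1)=-1$.

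The bookkeeping you defer closes as in the paper, but the constant $-2$ does not come from the corrections. The only correction terms are $y(1)y(\delta)+y(\delta\inv)y(1)=y(\delta)+y(\delta\inv)$, and these vanish by (v). The $-2$ is $\tfrac12$ times the sum of the two quadratic sums over $\F_Q^\times$, each of which equals $-\chi(\delta)-1=-2$. The point $t=-1$ needs no treatment, since $-1\notin K$.
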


\begin{proof}
    First, observe that the indicator function of $K$ is equal to $
    \frac{1 + \chi(t)}{2}$ for $t \in \F_Q^\times$. Then by Lemma \ref{LemmaQuadraticCharSum}, we have 

    \begin{align*}
        \sum_{t\in K} x(t) &= \frac 12 \sum_{t \in \F_Q^\times} (1+\chi(t))\chi(1+t) \\
        &= \frac 12 \left [ \sum_{t \in \F_Q^\times} \chi(1+t) + \sum_{t\in \F_Q^\times} \chi(t^2+t) \right] \\
        &= \frac 12 ( -\chi(1)  - \chi(1)) \\
        &= -1.
    \end{align*}

    Similarly, we have 

    \begin{align*}
        \sum_{t\in K} y(t) &= y(1) +\frac{1}{2}\sum_{t\in \F_Q^\times \setminus \{1\}}(1+\chi(t))\chi(1-t) \\
        &= 1 + \frac{1}{2}\sum_{t \in \F_Q^\times}\left[\chi(1-t) + \chi(t-t^2) \right] \\ &=
        1 + \frac 1 2(-\chi(1) - \chi(-1)) \\
        &= 1,
    \end{align*}
    noting that $\chi(-1) = -1$ since $Q \equiv 3 \pmod 4$.

    For claim $(iv)$, we have $\chi(t) = 1$, so $x(t\inv) = \chi(1+t\inv) = \chi(t)\chi(1+t\inv) = \chi(t+1)= x(t)$. Similarly, for claim $(v)$ we have $y(t\inv) = \chi(1-t\inv) = \chi(t-1) = \chi(-1)y(t) = -y(t)$.

    For claim $(iii)$, since $K$ is cyclic, we are showing that $x$ and $y$ form a Legendre pair. Using claim $(v)$ and Lemma \ref{LemmaQuadraticCharSum}, we have 

    \begin{align*}
        \sum_{t \in K} [x(t)x(\delta t) + y(t)y(\delta t)] &= y(1)y(\delta) +y(\delta\inv)y(1) +\sum_{t\in K} [\chi((1+t)(1+\delta t)) + \chi((1-t)(1-\delta t))] \\
        &=  \frac 12\sum_{t\in \F_Q^\times} (1+\chi(t))[\chi((1+t)(1+\delta t)) + \chi((1-t)(1-\delta t))] \\
        &=  \frac 12 \sum_{t\in \F_Q^\times}[\chi(1+(1+\delta)t+\delta t^2)+\chi(1-(1+\delta)t + \delta t^2)] \\
        &\qquad +\frac 12 \sum_{t\in \F_Q^\times}[\chi(t(1+t)(1+\delta t)) + \chi(t(1-t)(1-\delta t))] \\
        &= \frac 12 \sum_{t\in \F_Q^\times}[\chi(1+(1+\delta)t+\delta t^2)+\chi(1-(1+\delta)t + \delta t^2)] \\
        &\qquad +\frac 12 \sum_{t\in \F_Q^\times}\chi(t(1+t)(1+\delta t)) + \frac 12 \sum_{u \in \F_Q^\times} \chi(-u(1+u)(1+\delta u)) \\
        &= \frac 12 \sum_{t\in \F_Q^\times}[\chi(1+(1+\delta)t+\delta t^2)+\chi(1-(1+\delta)t + \delta t^2)] \\
        &\qquad +\frac 12 \sum_{t\in \F_Q^\times}\chi(t(1+t)(1+\delta t)) + \frac{\chi(-1)}{2}\sum_{u \in \F_Q^\times} \chi(u(1+u)(1+\delta u)) \\
        &=  \frac 12 \sum_{t\in \F_Q^\times}[\chi(1+(1+\delta)t+\delta t^2)+\chi(1-(1+\delta)t + \delta t^2)] \\
        &= \frac{1}{2}(-2 - 2\chi(\delta)) \\
        &= -2.
    \end{align*}
\end{proof}

We can now prove Theorem \ref{ThmDouble}.

\begin{proof}[Proof of Theorem \ref{ThmDouble}]
Let $g$ be a group generator of $K := (\F_Q^\times)^2 \cong \Z_q$. Define matrices $X$ and $Y$ with $\Z_q$-indexed rows by $X_{ij} = x(g^{j-i}), Y_{ij} = y(g^{j-i})$, where $x$ and $y$ are the functions from Lemma \ref{Lemmaxyproperties}. 

Set $T = \begin{pmatrix}
    X + I & Y \\
    Y^T & -X-I
\end{pmatrix}$, and observe that $T$ is symmetric since $X$ is symmetric by Lemma \ref{Lemmaxyproperties}, part $(iv)$. We have $$T^2 = \begin{pmatrix}
    X^2 +2X + I + YY^T & XY -YX \\
    Y^TX-XY^T & Y^TY +X^2+ 2X + I 
\end{pmatrix} $$

Observe that for $i \neq j$, using Lemma \ref{Lemmaxyproperties} $(iii)$ and $(iv)$, we have \begin{align*}
    (X^2 + YY^T)_{ij} &= \sum_{k \in \Z_q} x(g^{k-i})x(g^{j-k}) + y(g^{k-i})y(g^{k-j})\\
    &= \sum_{k \in \Z_q} x(g^{k-i})x(g^{k-j}) + y(g^{k-i})y(g^{k-j}) \\
    &= -2.
\end{align*}
Similarly, $(Y^TY+X^2)_{ij} = -2$ for $i \neq j$. When $i = j$, we have $(X^2 + YY^T)_{ij} = (Y^TY+X^2)_{ij} = 2q$.

Now observe that \begin{align*}
    (XY - YX)_{ij} &= \sum_{k \in \Z_q} x(g^{k-i})y(g^{j-k}) - \sum_{k\in \Z_q}y(g^{k-i})x(g^{j-k}) = 0,
\end{align*}
by using the fact that $k\mapsto -k +j+i$ is a bijection of $\Z_q$ to reindex the second sum. Similarly, using the fact that $X = X^T$ shows $Y^TX-XY^T = 0$.  

Combined, we have 
\begin{equation}\label{EqT^2}
    T^2 = (2q+3)I_{2q} -2I_2 \otimes(J_q-X).
\end{equation}

We will finally construct our desired $(4q+2) \times (4q+2)$ Seidel matrix $\Sigma$ and apply Corollary \ref{CorMaster}. 

Let \begin{align*}
R &= \begin{pmatrix}
    -I_q & 0 \\
    0 & I_q
\end{pmatrix}, U = \begin{pmatrix}
    T & T + R \\
    T+R & -T
\end{pmatrix},\\ V&= \begin{pmatrix}
    0 & 1 \\
    1 & 0
\end{pmatrix}, L = \begin{pmatrix}
    1 & -1 \\
    -1 & -1 \\
    1 & 1 \\
    -1 & 1 \\
\end{pmatrix} \otimes 1_q,\Sigma = \begin{pmatrix}
    U & L \\
    L^T & V
\end{pmatrix}.
\end{align*}

Since $Q \equiv 3 \pmod 8$, we have $\chi(2) = -1$. Thus $T_{ii} = 0$ for all $i$, and since $T$ is symmetric, one can check that $\Sigma$ is a Seidel matrix. By Lemma \ref{Lemmaxyproperties}, all row sums in $X$ are -1 and all row sums in $Y$ and $Y^T$ are 1. It is then straightforward to verify that all row sums in $\Sigma$ are 1. Now we can proceed as in the proof of Theorem \ref{Thm4blocks+2} and check that the nondiagonal entries of $\Sigma^2$ are nonpositive. 

We have $$\Sigma^2 = \begin{pmatrix}
    U^2 + LL^T & UL+LV \\
    L^T U+VL^T & L^TL+V^2 \\
\end{pmatrix}.$$

Now more calculations give \begin{align*} U^2 &= \begin{pmatrix}
    2T^2 + TR+RT+R^2 & TR-RT \\
    RT-TR & 2T^2 + TR+RT+R^2 
\end{pmatrix}, \hspace{-3em}&TR+RT = -2I_2 \otimes (X+I), \\ TR - RT &= \begin{pmatrix}0 & 2Y\\
-2Y^T & 0 \end{pmatrix}, &LL^T = \begin{pmatrix} 2 & 0 & 0 & -2 \\
0 & 2 & -2 & 0 \\
0 & -2 & 2 & 0 \\
-2 & 0 & 0 & 2\end{pmatrix} \otimes J_q, \\
R^2 &= I_{2q}.
\end{align*}

Write $U^2 + LL^T = \begin{pmatrix}
B_{11} & B_{12} \\
B_{21} & B_{22}
\end{pmatrix}$
with each $B_{ij}$ a $2q \times 2q$ matrix. Then \begin{align*}
    B_{11} = B_{22} &= (4q+5)I_{2q} -I_2 \otimes(2J_q-2X), & B_{12} = \begin{pmatrix}
    0 & 2Y - 2J_q \\ 
    -2Y^T - 2J_q & 0
\end{pmatrix}, \\
B_{21} &= \begin{pmatrix}
    0 & -2Y - 2J_q \\ 
    2Y^T - 2J_q & 0
\end{pmatrix},
\end{align*}
and each nondiagonal entry is nonpositive. 

We can compute the entries of $UL+LV$ using Lemma \ref{Lemmaxyproperties}: 

$$UL + LV = \begin{pmatrix}
    -4 & 0 \\
    0 & 0 \\
    0 & 0 \\
    0 & -4 \\
\end{pmatrix} \otimes 1_q,$$
and each entry is clearly nonpositive. The nonpositivity of the lower left follows from the fact that $\Sigma$ is symmetric. 

Finally, we have $L^T L  + V^2 = (4q+1)I_2$, which is nonpositive away from the diagonal. Applying Corollary \ref{CorMaster} proves the theorem.


\end{proof}

\subsection{Theorem \ref{ThmYP}}

The proof of Theorem \ref{ThmYP} uses similar ideas to that of Theorem \ref{ThmDouble}, but we require some additional notation. 

We will again let $\chi$ be the quadratic character of $\F_r$, $K = (\F_r^\times)^2$, and set $x(t) = \chi(1+t)$ as in Definition \ref{Defxy}. Let $g$ be a generator of $K$, and set $h_t = \chi(1-g^t)$. Recall that $m = (r-1)/2$. 

We set $G := \Z_m \times \F_r$, and let 
\begin{align*}
X &= \{t \in \Z_m : x(g^t) = 1\},\\
D &= \F_r \setminus (\{0\} \cup \{\pm g^t : t \in X\}),\\
H(a) &= \sum_{t \in \Z_m}h_t h_{t+a}, \\
\Delta(a) &= |D \cap g^a D|. 
\end{align*}

We also define three subsets of $G$: 

\begin{align*}
    A &= \{(a,b): a \neq 0 \text{ and  } (b = 0 \text{ or } h_a\chi(b) = 1)\}, \\
    B &= \{(a,b): a \neq 0 \text{ and  } (b = 0 \text{ or } h_a\chi(b) = -1)\}, \\
    C &= \{(a,b): bg^{-a} \in D\}.
\end{align*}

It is convenient to give some properties of all of these sets before the main proof of Theorem \ref{ThmYP}.

\begin{lemma}\label{LemmaABCDsizes}
    The following hold: 
\begin{enumerate}[(i)]
    \item $|A| = |B| = m^2 - 1$, 
    \item $|C| = m^2 + m$, 
    \item $|D| = m+1$,
    \item $A = -A, \ B = -B.$
    \item $H(a) + 2\Delta(a) = m$ for $a \neq 0$.
\end{enumerate}
\end{lemma}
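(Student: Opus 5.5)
The plan is to verify all five parts by direct counting. The inputs are Lemma \ref{Lemmaxyproperties} and the structure of $\F_r^\times$ when $r \equiv 3 \pmod 4$.

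\textbf{Preliminary facts.} Since $r\equiv 3\pmod 4$, we have $\chi(-1)=-1$. Hence $-1\notin K$, and $\F_r^\times = K \sqcup (-K)$ with $|K| = m$. Note that $h_0=\chi(0)=0$. For $a\neq 0$ we have $g^a\neq 1$, so $h_a=\chi(1-g^a)=y(g^a)\in\{\pm1\}$. Likewise $x(g^t)=\chi(1+g^t)\in\{\pm1\}$ for every $t$, because $-1\notin K$.

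\textbf{Parts (i)--(iii).} For (i), fix $a\neq 0$; there are $m-1$ such $a$. The admissible $b$ are $b=0$ together with the $m$ nonzero $b$ for which $\chi(b)=h_a$, or $\chi(b)=-h_a$ in the case of $B$. This gives $(m-1)(m+1)=m^2-1$ elements in each of $A$ and $B$.

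For (iii), Lemma \ref{Lemmaxyproperties}(i) gives $\sum_{t\in\Z_m}x(g^t)=-1$, and all terms are $\pm1$. Hence $|X|=(m-1)/2$. The elements $\pm g^t$ with $t\in X$ are pairwise distinct because $-1\notin K$, so they form a set of size $m-1$. Therefore $|D| = (r-1)-(m-1) = 2m-(m-1) = m+1$.

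For (ii), fix $a$. Then $(a,b)\in C$ exactly when $b\in g^aD$, which gives $m+1$ choices of $b$. Summing over the $m$ values of $a$ yields $|C| = m(m+1)$.

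\textbf{Part (iv).} I will show that $(a,b)\in A$ implies $(-a,-b)\in A$. The condition $a\neq 0$ is symmetric, and $b=0$ is preserved. For $b\neq0$, Lemma \ref{Lemmaxyproperties}(v) gives $h_{-a}=y(g^{-a})=-y(g^a)=-h_a$, and we also have $\chi(-b)=-\chi(b)$. The two sign changes cancel, so $h_{-a}\chi(-b)=h_a\chi(b)$. The same argument applies to $B$.

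\textbf{Part (v).} I expect this to be the only substantive step. Write $x_t=x(g^t)$, $y_t=y(g^t)$, and $P(a)=\sum_t x_tx_{t+a}$.

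First I handle $H(a)$. The sequence $h$ agrees with $y$ except at $t=0$, where $h_0=0$ while $y_0=1$. So for $a\neq0$,
\[
H(a)=\sum_t y_ty_{t+a} - y_a - y_{-a}.
\]
By Lemma \ref{Lemmaxyproperties}(v), $y_{-a}=-y_a$, so the correction terms cancel. The Legendre pair property, Lemma \ref{Lemmaxyproperties}(iii), then gives $H(a) = -2-P(a)$.

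Next I handle $\Delta(a)$. Every nonzero element of $\F_r$ is uniquely $\varepsilon g^t$ with $\varepsilon=\pm1$. Such an element lies in $D$ exactly when $x_t=-1$, so its indicator is $(1-x_t)/2$. Similarly, $\varepsilon g^t\in g^aD$ exactly when $x_{t-a}=-1$. Summing over both signs $\varepsilon$ gives
\[
\Delta(a)=2\cdot\tfrac14\sum_t(1-x_t)(1-x_{t-a}) = \tfrac12\bigl(m+2+P(a)\bigr).
\]
This uses $\sum_t x_t=-1$, which contributes $-2\sum_t x_t = 2$. Adding the two expressions, $H(a)+2\Delta(a) = -2-P(a)+m+2+P(a) = m$.

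The main care needed is the bookkeeping at $t=0$ in $H$, and confirming that $\pm g^t$ parametrize $\F_r^\times$ without overlap. Both rest on $\chi(-1)=-1$.
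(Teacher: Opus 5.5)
Your proposal is correct and follows the paper's proof essentially step for step: the same counts for (i)--(iii), the same sign-cancellation via $\chi(-1)=-1$ and Lemma \ref{Lemmaxyproperties}(v) for (iv), and for (v) the same reduction $H(a)=\sum_{k\in K}y(k)y(g^ak)$ through $y_a+y_{-a}=0$, followed by the Legendre pair identity. The only difference is bookkeeping: you get $\Delta(a)=\tfrac12(m+2+P(a))$ by expanding the indicators $(1-x_t)/2$, while the paper reaches the same expression through the counts $\gamma_{\epsilon,\eta}$.
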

\begin{proof}
    For $(a,b) \in A$, there are $m-1$ choices for $a$, and by Lemma \ref{LemmaQuadraticCharSum}, for each there are exactly $(r+1)/2 = m+1$ choices for $b$, making a total of $m^2-1$. The proof for $B$ is analogous. 

    For $C$, we first find $|X|$. By Lemma \ref{Lemmaxyproperties}, we have that $\sum_{t \in \Z_m } x(g^t) = -1$, so $|X| = (m-1)/2$. Now we have $g^{t_1} \neq -g^{t_2}$ for all $t_1,t_2\in X$. If not, then $-1 = g^{t_1 - t_2} \in K$, but this is impossible since $r \equiv 3 \pmod 4$. Therefore $|D| = r - 2|X| - 1 = m+1$. For a fixed $a$, we have $|g^aD| =  m+1$. There are $m$ choices for $a$, so $|C| = m^2+m$.

    Since $r \equiv 3 \pmod 4$, we have $\chi(-1) = -1$. Claim $(iv)$ follows from this fact, the multiplicativity of $\chi$, and Lemma \ref{Lemmaxyproperties}. 

For the last claim, we have that \begin{align*}H(a) &= \sum_{t \in \Z_m} h_{t}h_{t+a} \\
&= \sum_{t\in \Z_m} \chi(1-g^t)\chi(1-g^{t+a}) \\
&= \sum_{k \in K} y(k)y(g^ak),
\end{align*}
where the last line follows from Lemma \ref{Lemmaxyproperties}, part $(v)$. Let $\gamma_{\epsilon,\eta}$ denote the number of terms in $\sum_{k \in K} x(k)x(g^ak)$ where $x(k) = \epsilon$ and  $x(g^a k) = \eta$. Since $\sum_{k\in K} x(k) = -1$, there are $\frac{m-1}{2}$ values $k$ with $x(k) = 1$ and $\frac{m+1}{2}$ with $x(k) = -1$. Therefore we have 

\begin{align*} \gamma_{1,-1} &= \gamma_{-1,1} =\frac{m-1}{2} - \gamma_{1,1}, \qquad
\gamma_{-1,-1} = \gamma_{1,1} +1.
\end{align*}
Then \begin{equation*}\sum_{k \in K} x(k)x(g^ak) = \gamma_{1,1} + \gamma_{-1,-1} - \gamma_{1,-1} - \gamma_{-1,1} = 4\gamma_{1,1} + 2-m.
\end{equation*}
We have that $D = \{ \pm k : k \in K,\ x(k) = -1\}$. Then $\Delta(a) = 2\gamma_{-1,-1} =2\gamma_{1,1} +2$. By Lemma \ref{Lemmaxyproperties}, we have 

\begin{align*}
    -2 &= \sum_{k \in K} [x(k)x(g^a k) + y(k) y(g^a k)]\\
    &= 2\Delta(a) -2 - m + H(a),
\end{align*}
and rearranging proves the claim.
\end{proof}

We can now prove Theorem \ref{ThmYP}. 
\begin{proof}[Proof of Theorem \ref{ThmYP}]
    Given a subset $U \subseteq G$, we define the matrix $P_U$ by $(P_U)_{ij} = \textbf{1}_U(j-i)$ (where $\textbf{1}_U$ is the indicator function of $U$). Let $M$ be the following matrix: 
    $$ M = \begin{pmatrix} 
        P_A & P_C \\
        P_C^T & P_B
    \end{pmatrix}.$$
    By Lemma \ref{LemmaABCDsizes}, part $(iv)$, we see that $P_A$ and $P_B$ are symmetric. Since $(0,0) \not \in A \cup B$, we have that $M$ is an adjacency matrix of some graph $\Gamma$. Then $\Sigma = J_{2q} - I_{2q} - 2M$ is the Seidel matrix of $\Gamma$, and we claim that $\Gamma$ is the desired Ramsey $(B_{n-1},B_n;4n-2)$ graph. 

    By Lemma \ref{LemmaABCDsizes}, we see that $\Gamma$ is regular with valence $2m^2+m-1 = q-1$. Each row sum of $\Sigma$ is therefore $2q-1-2(q-1) = 1$. 

    We now compute the (nondiagonal) entries of $$M^2 = \begin{pmatrix}
        P_A^2+P_CP_C^T & P_AP_C+P_CP_B \\
        P_C^TP_A+P_BP_C^T & P_C^TP_C+P_B^2
    \end{pmatrix}.$$

Let $d = j-i \in G$. Then we have for $U \in \{A,B\}$ that  
\begin{align*}
    (P_U^2)_{ij} &= \sum_{k \in G} \textbf{1}_U(k-i)\textbf{1}_U(j-k) \\
    &= \sum_{k \in G} \textbf{1}_U(k-i)\textbf{1}_U(k-j) \\
    &= \sum_{k \in G} \textbf{1}_U(k)\textbf{1}_U(k+d),
\end{align*} 
    using Lemma \ref{LemmaABCDsizes} part $(iv)$ in the second line and reindexing by $k \mapsto k+j$ in the third. Similarly, 

\begin{align*}
    (P_CP_C^T)_{ij} &= \sum_{k \in G} \textbf{1}_C(k-i)\textbf{1}_C(k-j) \\
    &= \sum_{k \in G} \textbf{1}_C(k)\textbf{1}_C(k+d).
\end{align*} 
 
These values therefore measure autocorrelations of $A,B$, and $C$. We will compute these values for all possible choices of $d$. 

\textbf{Case 1: } $d = (a,0)$, $a \neq 0$. 

Then $$(P^2_A)_{i,i+d} = \#\{(\alpha,\beta)\in G: (\alpha,\beta) \in A, (\alpha + a,\beta) \in A\}.$$ For $\beta = 0$, there are $m-2$ choices for $\alpha$ since all values in $\Z_m \setminus \{0,-a\}$ work. Otherwise, we need $h_\alpha = h_{\alpha + a} = \chi(\beta)$. Recall that $\sum_{ \alpha \in \Z_m}h_\alpha h_{\alpha + a} = H(a)$. Each term in the sum is $1$ or $-1$ except when $\alpha \in \{0,-a\}$. The number $\gamma$ of choices of $\alpha$ with $h_\alpha = h_{\alpha + a}$ therefore satisfies $\gamma - (m -2 - \gamma) = H(a)$, so $\gamma = \frac{m-2+H(a)}2$. Then for a fixed $\alpha$, there are exactly $m$ choices of $\beta$ with $\chi(\beta) = h_\alpha$. A similar argument holds for $(P^2_B)_{i,i+d}$. Therefore we have

$$(P^2_A)_{i,i+d} = (P^2_B)_{i,i+d} = m-2 + \frac{m(m-2+H(a))}{2} = \frac{m^2-4+mH(a)}{2}.$$

Now for $(P_CP_C^T)_{i,i+d}$, we need to count the number of $(\alpha,\beta)$ with $\beta g^{-\alpha}, \beta g^{-(\alpha + a)} \in D$. For a given $\alpha$, the number of valid $\beta$ is $|g^{\alpha} D \cap g^{\alpha + a}D|$, which after translating is equal to $\Delta(a)$. Therefore $$(P_CP_C^T)_{i,i+d} = m\Delta(a).$$

\textbf{Case 2:} $d = (0,b)$, $b \neq 0$. 

We have $$(P^2_A)_{i,i+d} = \#\{(\alpha,\beta)\in G: (\alpha,\beta) \in A, (\alpha,\beta+b) \in A\}.$$

For $\beta \not \in \{0,-b\}$, we require $h_\alpha = \chi(\beta) = \chi(\beta + b)$. By Lemma \ref{LemmaQuadraticCharSum}, we have $\sum_{\beta \in \F_r} \chi(\beta^2 +b\beta) = -\chi(1) = -1$. Let $\gamma_{\pm} = \#\{\beta \in \F_r : \chi(\beta) = \pm \chi(\beta + b)\}$. Then we have $\gamma_+ - \gamma_- = -1$. Since $b \neq 0$ and $\chi(\beta)\chi(\beta+b) = 0$ for $\beta = 0, -b$, we have $\gamma_+ -(r-2-\gamma_+) = -1,$ and solving gives $$\gamma_+ = m-1.$$ 

It follows from Lemma \ref{Lemmaxyproperties} that $\sum_{\alpha \in \Z_m} h_\alpha = 0$. Thus for any $\beta$, including $0$ and $-b$, there are $(m-1)/2$ choices of $\alpha$. Then $$(P^2_A)_{i,i+d} = \frac{m^2-1}{2}.$$ A similar argument holds for $(P^2_B)_{i,i+d}$. 

Now for $d = (a,b)$ with $b$ nonzero and $a$ not necessarily zero, we have  \begin{align*}(P_CP_C^T)_{i,i+d} &= \#\{(\alpha,\beta) \in G : \beta g^{-\alpha}, (\beta+b)g^{-\alpha -a} \in D\} \\
&= \#\{(u,v) \in D^2: u = \beta g^{-\alpha}, v = (\beta+b)g^{-\alpha -a} \text{ for some $(\alpha, \beta) \in G$} \}.
\end{align*}
Such a pair $(u,v)$ in the latter set gives $b = g^{\alpha} (g^av-u)$. If $b \neq 0$, then we require $u \neq g^a v$. The number of pairs with $u = g^a v$ is precisely $\Delta(a)$. 

We have also that \begin{equation}\label{EqD^2}\frac{b}{g^av-u} = g^\alpha \in K. \end{equation} Now observing that $D = -D$, we have that $(u,v) \in D^2 $ if and only if $(-u,-v) \in D^2$. But since $\chi(-1) = -1$, it follows that exactly one of $(u,v)$ and $(-u,-v)$ makes \eqref{EqD^2} valid. Therefore \begin{equation}\label{EqP_C_b_nonzero}(P_CP_C^T)_{i,i+d} = \frac{|D^2|-\Delta(a)}{2}\end{equation}
In particular, for $d = (0,b)$, we have \begin{equation*}(P_CP_C^T)_{i,i+d} = \frac{m^2+m}{2}.
\end{equation*}

\textbf{Case 3:} $a,b\neq 0$

For $\epsilon,\eta\in \{\pm1\}$, let $$T_\epsilon = \{0\} \cup \{b \in \F_r : \chi(b) = \epsilon\}, N_{\epsilon,\eta}(b) = \#\{\beta \in \F_r : \beta \in T_\epsilon, \beta + b \in T_\eta\}.$$

Then for $b \neq 0$, we have $$N_{\epsilon,\eta} (b)= \frac 1 4 (r+2-\epsilon \eta +(\eta - \epsilon)\chi(b)).$$

We can see this by writing $\textbf{1}_{T_\epsilon}(b) = \frac{1 + \epsilon \chi(b) + \textbf{1}_{b = 0}}2$ so that by Lemma \ref{LemmaQuadraticCharSum} and the fact that $\chi(-1) = -1$, we have

\begin{align*}
    N_{\epsilon,\eta}(b) &= \sum_{\beta \in \F_r} \textbf{1}_{T_\epsilon}(\beta)\textbf{1}_{T_{\eta}}(\beta + b) \\
    &= \frac 1 4\sum_{\beta \in \F_r}(1+\epsilon\chi(\beta) + \textbf{1}_{\beta = 0})(1+\eta \chi(\beta+b)+\textbf{1}_{\beta+b=0}) \\
    &= \frac 14 \sum_{\beta \in \F_r}(1+\textbf{1}_{\beta = 0} + \textbf{1}_{\beta +b = 0} + \epsilon\chi(\beta) + \eta\chi(\beta + b) 
    +\epsilon \eta \chi(\beta^2 + b \beta) \\ &\qquad + \epsilon\chi(\beta)\textbf{1}_{\beta + b = 0} + \eta\chi(\beta+b)\textbf{1}_{\beta = 0} + \textbf{1}_{\beta = 0}\textbf{1}_{\beta + b = 0}) \\
    &= \frac 14 (r+2 + \epsilon\chi(-b) + \eta\chi(b) - \epsilon\eta) \\
    &= \frac 14 (r+2 -\epsilon \eta + (\eta -\epsilon)\chi(b)).
\end{align*}

Now observe that
\begin{align*}(P_{A}^2)_{i,i+d} &= \sum_{\alpha \in \Z_m \setminus \{0,-a\}} N_{h_\alpha, h_{\alpha + a}}(b) \\
&= \frac 1 4\sum_{\alpha \in \Z_m \setminus \{0,-a\}}(r+2-h_{\alpha } h_{\alpha+a} + (h_{\alpha +a}-h_\alpha)\chi(b) ) \\
&= \frac 14 ((m-2)(r+2) - H(a) +(h_{-a}- h_a)\chi(b)) \\
&= \frac 14 ((m-2)(2m+3) - H(a) -2h_a\chi(b)). 
\end{align*}

Similarly, 

\begin{align*}
    (P_{B}^2)_{i,i+d} &= \sum_{\alpha \in \Z_m \setminus \{0,-a\}} N_{-h_\alpha, -h_{\alpha + a}}(b) \\
    &=\frac 14 ((m-2)(2m+3) - H(a) +2h_a\chi(b)).
\end{align*}

From \eqref{EqP_C_b_nonzero}, we have $$(P_CP_C^T)_{i,i+d} = \frac{|D|^2-\Delta(a)}{2} = \frac{(m+1)^2-\Delta(a)}{2}.$$

Now we turn to the off diagonal blocks of $M^2$. Since $M$ is symmetric, we need only compute $P_AP_C+P_CP_B$.

First, we claim that $P_A$ and $P_C$ commute. For any $i,j$ with $d = j-i$, we have:

\begin{align*}
    (P_AP_C)_{ij} &= \sum_{k \in G}\textbf{1}_A(k-i)\textbf{1}_C(j-k) \\
    &= \sum_{k \in G}\textbf{1}_A(k)\textbf{1}_C(d-k) && (k\mapsto k+i)\\
    &= \sum_{k \in G}\textbf{1}_C(k)\textbf{1}_A(d-k)&& (k\mapsto d-k)\\
    &= (P_CP_A)_{ij}.
\end{align*}

Notice that the function $\textbf{1}_A+\textbf{1}_B$ is equal to $\textbf{1}_{\alpha \neq 0} +\textbf{1}_{(\alpha \neq 0) \wedge (\beta = 0)}$. Therefore by Lemma \ref{LemmaABCDsizes}, we have \begin{align*}
    (P_AP_C + P_CP_B)_{ij} &= (P_C(P_A+P_B))_{ij} \\
    &= \sum_{k \in G} \textbf{1}_C(k-i)(\textbf{1}_A(j-k)+\textbf{1}_B(j-k)) \\
    &= \sum_{k \in G} \textbf{1}_C(k-i)(\textbf{1}_A(k-j)+\textbf{1}_B(k-j)) \\
    &= \sum_{k \in G} \textbf{1}_C(k)(\textbf{1}_A(k-d)+\textbf{1}_B(k-d)) \\
    &= \sum_{\alpha \in \Z_m\setminus \{a\}}\left( \textbf{1}_C(\alpha,b)+\sum_{\beta \in \F_r} \textbf{1}_C(\alpha,\beta) \right)
\end{align*}

When $b = 0$, we have $\textbf{1}_C(\alpha,b) = 0$. Otherwise, for a fixed $b \neq 0$, the number of elements $(\alpha,b) \in C$ is $\frac{m+1}{2}$. This is because as $\alpha$ ranges over $\Z_m$, $bg^{-\alpha}$ ranges over $bK$. Since $D = -D$ and $-1 \not \in K$, we have that $|D\cap K| = |D\cap -K| = \frac{m+1}{2}$. Therefore \begin{equation*} \sum_{\alpha \in \Z_m\setminus \{a\}} \textbf{1}_C(\alpha,b) = \frac{m+1}{2} - \textbf{1}_C(a,b). 
\end{equation*}
For a fixed $\alpha$, there are exactly $|D|$ choices of $\beta$ with $(\alpha, \beta) \in C$. Combining everything, we have 

\begin{align} \label{EqNonDiag}
    (P_AP_C + P_CP_B)_{ij} = \begin{cases}
        m^2-1& \text{if $b = 0$,} \\
        m^2-1 + \frac{m+1}{2} -\textbf{1}_C(a,b) & \text{if $b \neq 0$}.
    \end{cases}
\end{align}

A straightforward calculation gives $\Sigma^2 = 4(M^2+M) +I_{2q}-(2q-2)J_{2q}$. The results of this section can be used to calculate the values of $(\Sigma^2)_{i,i+d}$ for all values of $d$. We have the following values for $(M^2+M)_{i,i+d}$, using $\mp$ and $\pm$ for the upper left and lower right blocks:

\begin{table}[h!] \label{TableDValues}
\begin{center}
\begin{tabular}{|c|c|c|}
\hline
$d$ & diagonal blocks & nondiagonal blocks \\
\hline
$(a,0), a \neq 0$ & $\frac{m^2-4+mH(a)}{2} + m\Delta(a) + 1$& $m^2-1$\\
$(0,b), b \neq 0$ &  $\frac{2m^2+m-1}{2}$ & $\frac{2m^2+m-1}{2}$\\
$(a,b), a,b \neq 0$ & $\frac{2m^2-m-6 - H(a) \mp2h_a\chi(b)}{4}+ \frac{(m+1)^2 -\Delta(a)}{2} + \frac{1\pm h_a\chi(b)}{2}$ & $\frac{2m^2+m-1}{2}$\\
\hline 
\end{tabular}
\end{center}
\end{table}

Note that \eqref{EqNonDiag} holds for $d = (0,0)$ on the nondiagonal blocks, so we are done if we can check that each entry is at most $\frac{q-1}{2} = \frac{2m^2+m-1}{2}$. The only nontrivial cases are the $(a,0)$ and $(a,b)$ diagonal cases. 
After applying Lemma \ref{LemmaABCDsizes}, part $(v)$, the $(a,0)$ and $(a,b)$ diagonal block entries in the table simplify to $m^2-1 < \frac{q-1}{2}$ and $\frac{q-1}{2}$, respectively. Corollary \ref{CorMaster} completes the proof.



\end{proof}



\section{Discussion}\label{SectionDiscussion}

The main results of this paper were found with the {\scshape Station}, an environment consisting of multiple AI agents designed to simulate the scientific discovery process. Agents write, share, and review papers in a stored knowledge base. The {\scshape Station} was tested on many other construction problems besides book Ramsey numbers. A more thorough accounting of the other problems and details on the {\scshape Station}'s architecture can be found in \cite{chung2026autonomousmathematicaldiscoveryopenworld}.  

For the book Ramsey problem, two {\scshape Station} instances were used. The first had no internet access and discovered the construction in Theorem \ref{Thm4blocks+2}. We ran the second instance with access to the internet and the results from the first. Agents then discovered, autonomously, the infinite family from Theorem \ref{ThmDouble} and produced several new finite constructions. Some of these latter constructions were special cases of Theorem \ref{ThmYP}, and the full theorem was proven after human review. Theorem \ref{Thm2blocks} was noticed independently by the {\scshape Station} and the human authors. The {\scshape Station} produced proofs for each of the main results, which were then synthesized and rewritten by the authors. 

The constructions in this article highlight the usefulness of design theoretic tools in Ramsey theory. Undoubtedly, there are more connections to the book Ramsey numbers $R(B_m,B_n)$, not only the case $m = n-1$ (see \cite{DaiLin2026bookramseynumbersalgebraic}, for example). More work can be done to find other infinite families with tight lower bounds. It would also be interesting to know if any of these constructions generalize for book Ramsey numbers in other ways, for instance in multicolor versions $R(B_r,B_s,B_t)$ or ``mixed" versions $R(B_n,H)$ for some non-book graph $H$. 

The second author is still optimistic that Conjecture \ref{ConjBn-1Bn} is true, but is less confident that a uniform, explicit construction exists for all $n$. Its status is now similar to that of the Hadamard conjecture\footnote{The Hadamard conjecture states that there is a Hadamard matrix of order $4n$ for all $n$. }, which is still open, but has many constructions.

\renewcommand{\newblock}{}
\bibliography{bibliography}
\bibliographystyle{plain}
\appendix 

\section{Table of Values of $R(B_{n-1},B_n)$}
The table on the following pages gives all known values of $R(B_{n-1},B_n)$ for $n \le 200$. In all cases where the value is known, it is equal to $4n-1$. Bolded rows are new results from this paper (or \cite{chung2026autonomousmathematicaldiscoveryopenworld}). The smallest open case is $n = 64$. Conference graph existence is taken from \cite{BrouwerSRGparameters}. 

The third column lists sources for a lower bound construction of a Ramsey $(B_{n-1},B_n;4n-2)$ graph. Some constructions have overlapping values of $n$, though we do not necessarily list all of them for small $n$. The key for the sources is as follows:
\vspace{10em}
\begin{center}
\begin{tabular}{|c|c|}
\hline 
\bf{Label} & \bf{Reference} \\ 
\hline
A& Aybar's repository \cite{AybarBookRamsey} \\
B& Bhattacharjee \cite{BhattacharjeeBookRamsey} \\
C2& Two block conference graph construction from Theorem \ref{Thm2blocks}\\
C4& Four block conference graph construction from Theorem \ref{Thm4blocks+2} \\
CH & Chv\'atal-Harary \cite{ChvatalHararyGenRamsey3}\\ 
Cl & Clancy \cite{ClancySmallRamsey}\\ 
D& Doubled Legendre family from Theorem \ref{ThmDouble} \\
F& Finite construction from Station \cite{chung2026autonomousmathematicaldiscoveryopenworld} \\
L+& Finite family from Lidick\'y et al. \cite{LidickyMcKinleyPfenderSmallBooksWheels} \\
P& Paley construction from \cite{WJWBooks} (superseded by C2) \\
Pr& Price \cite{PriceBookRamsey}\\
S+ & Shao et al. \cite{ShaoXuBoPan} \\ 
Sz& Szekeres-type family from \cite{TurtureanBookRamsey} and \cite{AybarBookRamsey} \\
T& Turturean's progress report \cite{TurtureanBookRamsey} \\
U& Ursin \cite{ToukoUrsinPersonalComm} \\
W1& Construction from \cite{WJWThesis} \\
W2& Finite family from \cite{WJWBooks} \\
W3& Construction found at \cite{WJWRamseyConstructionsGithub}\\
Y& Construction from Theorem \ref{ThmYP} \\
\hline 
\end{tabular}
\end{center}





\begingroup
\small
\setlength{\columnsep}{1.5em}   
\setlength{\tabcolsep}{3pt}
\setlength{\parindent}{0pt}

\twocolumn                      

\tablehead{%
  \hline
  $n$ & Known & Sources \\
  \hline
}
\tabletail{\hline}
\tablelasttail{\hline}

\newcommand{\tablebody}{

2 & Yes & CH \\
3 & Yes & Cl \\
4 & Yes & S+ \\
5 & Yes & W1 \\
6 & Yes & W2, L+, C4, D \\
7 & Yes & W2, L+, P, Sz, A, C2 \\
8 & Yes & W2, L+ \\
9 & Yes & W2, L+, P, C2 \\
10 & Yes & W2, L+, C4, D \\
11 & Yes & W2, L+, Sz, A, Y, F \\  
12 & Yes & W2, L+ \\
13 & Yes & W2, L+, P, C2 \\
14 & Yes & W2, L+, C4, D \\
15 & Yes & W2, L+, P, Sz, A, C2 \\
16 & Yes & W2, L+ \\
17 & Yes & W2, L+, Sz, A \\
18 & Yes & W2, L+, C4 \\
19 & Yes & W2, L+, P, C2 \\
20 & Yes & W2, L+ \\
21 & Yes & L+, P, Sz, A, C2 \\
22 & Yes & T, A, D \\
23 & Yes & T, A, C2 \\
24 & Yes & T, A \\
25 & Yes & P, C2 \\
26 & Yes & T, A, C4 \\
27 & Yes & P, Sz, A, C2 \\
28 & Yes & T, A, Y, F \\  
29 & Yes & T, A \\
30 & Yes & T, A, C4, D \\
31 & Yes & P, C2 \\
32 & Yes & T, A \\
33 & Yes & Sz, A, C2 \\
34 & Yes & T, A, D \\
35 & Yes & Sz, A \\
36 & Yes & T \\
37 & Yes & P, C2 \\
38 & Yes & T, C4 \\
39 & Yes & T \\
40 & Yes & T \\
41 & Yes & P, Sz, A, C2 \\
42 & Yes & T, C4, D \\
43 & Yes & T, C2 \\
44 & Yes & T \\
45 & Yes & P, Sz, A, C2 \\
46 & Yes & T, C4 \\
47 & Yes & T \\
48 & Yes & T \\
49 & Yes & P, C2 \\
50 & Yes & T, C4 \\
51 & Yes & P, C2 \\
52 & Yes & T \\
53 & Yes & Sz, A \\
54 & Yes & T, C4, D \\
55 & Yes & P, C2 \\
56 & Yes & T \\
57 & Yes & P, Sz, A, C2 \\
58 & Yes & B \\  
\textbf{59} & \textbf{Yes} & \textbf{C2} \\  
60 & Yes & W3 \\
61 & Yes & P, Sz, A, C2 \\
\textbf{62} & \textbf{Yes} & \textbf{C4} \\  
63 & Yes & P, Sz, A, C2 \\
64 & No & -- \\
65 & No & -- \\
\textbf{66} & \textbf{Yes} & \textbf{C4, D} \\
67 & Yes & F, B \\  
68 & No & -- \\
69 & Yes & P, C2 \\
70 & Yes & D, Pr \\
71 & Yes & Sz, A \\
72 & Yes & W3 \\
73 & No & -- \\
\textbf{74} & \textbf{Yes} & \textbf{C4} \\  
75 & Yes & P, C2 \\
76 & No & -- \\
77 & Yes & Sz, A, C2 \\  
78 & No & -- \\
79 & Yes & P, C2 \\
80 & No & -- \\
81 & No & -- \\
\textbf{82} & \textbf{Yes} & \textbf{C4, D} \\
83 & Yes & Sz, A \\
84 & No & -- \\
85 & Yes & P, C2 \\
\textbf{86} & \textbf{Yes} & \textbf{C4, Y, F} \\  
87 & Yes & P, Sz, A, C2 \\
88 & No & -- \\
89 & No & -- \\
\textbf{90} & \textbf{Yes} & \textbf{C4, D} \\
91 & Yes & P, C2 \\
92 & No & -- \\
93 & Yes & U, C2 \\  
94 & No & -- \\
95 & Yes & Sz, A \\
96 & No & -- \\
97 & Yes & P, C2 \\
\textbf{98} & \textbf{Yes} & \textbf{C4} \\  
99 & Yes & P, C2 \\
100 & No & -- \\
101 & No & -- \\
\textbf{102} & \textbf{Yes} & \textbf{C4} \\
103 & No & -- \\
104 & No & -- \\
105 & Yes & Sz, A \\
\textbf{106} & \textbf{Yes} & \textbf{D} \\
107 & No & -- \\
108 & No & -- \\
109 & No & -- \\
\textbf{110} & \textbf{Yes} & \textbf{C4} \\  
111 & Yes & Sz, A \\
112 & No & -- \\
\textbf{113} & \textbf{Yes} & \textbf{C2} \\  
\textbf{114} & \textbf{Yes} & \textbf{C4, D} \\
115 & Yes & P, C2 \\
116 & No & -- \\
117 & Yes & P, Sz, A, C2 \\
\textbf{118} & \textbf{Yes} & \textbf{C4} \\  
119 & No & -- \\
120 & No & -- \\
121 & Yes & P, C2 \\
\textbf{122} & \textbf{Yes} & \textbf{C4, D} \\
123 & Yes & Sz, A \\
124 & No & -- \\
125 & Yes & Sz, A \\
\textbf{126} & \textbf{Yes} & \textbf{C4, D} \\
\textbf{127} & \textbf{Yes} & \textbf{Y, F} \\  
128 & No & -- \\
129 & Yes & P, C2 \\
130 & No & -- \\
131 & Yes & Sz, A \\
132 & No & -- \\
133 & No & -- \\
134 & No & -- \\
135 & Yes & P, C2 \\
136 & No & -- \\
137 & Yes & Sz, A \\
\textbf{138} & \textbf{Yes} & \textbf{C4} \\  
139 & Yes & P, C2 \\
140 & No & -- \\
141 & Yes & P, Sz, A, C2 \\
\textbf{142} & \textbf{Yes} & \textbf{D} \\
143 & Yes & Sz, A \\
144 & No & -- \\
145 & Yes & P, C2 \\
146 & No & -- \\
147 & Yes & P, Sz, A, C2 \\
148 & No & -- \\
149 & No & -- \\
\textbf{150} & \textbf{Yes} & \textbf{C4} \\  
151 & No & -- \\
152 & No & -- \\
153 & Yes & U \\
\textbf{154} & \textbf{Yes} & \textbf{C4, D} \\  
155 & Yes & Sz, A \\
156 & No & -- \\
157 & Yes & P, C2 \\
\textbf{158} & \textbf{Yes} & \textbf{C4} \\  
159 & Yes & P, C2 \\
160 & No & -- \\
161 & Yes & Sz, A \\
162 & No & -- \\
163 & No & -- \\
164 & No & -- \\
165 & Yes & Sz, A \\
\textbf{166} & \textbf{Yes} & \textbf{D} \\
167 & No & -- \\
168 & No & -- \\
169 & Yes & P, C2 \\
\textbf{170} & \textbf{Yes} & \textbf{C4} \\  
171 & Yes & Sz, A \\
172 & No & -- \\
173 & Yes & Sz, A \\
\textbf{174} & \textbf{Yes} & \textbf{C4, D} \\
175 & Yes & P, C2 \\
\textbf{176} & \textbf{Yes} & \textbf{Y} \\
177 & Yes & P, C2 \\
178 & No & -- \\
179 & No & -- \\
180 & No & -- \\
181 & Yes & P, C2 \\
\textbf{182} & \textbf{Yes} & \textbf{C4} \\  
183 & No & -- \\
184 & No & -- \\
185 & Yes & Sz, A \\
\textbf{186} & \textbf{Yes} & \textbf{C4} \\  
187 & Yes & P, C2 \\
188 & No & -- \\
189 & No & -- \\
\textbf{190} & \textbf{Yes} & \textbf{D} \\  
191 & No & -- \\
192 & No & -- \\
193 & No & -- \\
\textbf{194} & \textbf{Yes} & \textbf{C4} \\  
195 & Yes & P, C2 \\
196 & No & -- \\
197 & Yes & Sz, A \\
\textbf{198} & \textbf{Yes} & \textbf{C4} \\  
199 & Yes & P, C2 \\
200 & No & -- \\}

\begin{center}
\begin{supertabular}{|r|c|l|}

2 & Yes & CH \\
3 & Yes & Cl \\
4 & Yes & S+ \\
5 & Yes & W1 \\
6 & Yes & W2, L+, C4, D \\
7 & Yes & W2, L+, P, Sz, A, C2 \\
8 & Yes & W2, L+ \\
9 & Yes & W2, L+, P, C2 \\
10 & Yes & W2, L+, C4, D \\
11 & Yes & W2, L+, Sz, A, Y, F \\  
12 & Yes & W2, L+ \\
13 & Yes & W2, L+, P, C2 \\
14 & Yes & W2, L+, C4, D \\
15 & Yes & W2, L+, P, Sz, A, C2 \\
16 & Yes & W2, L+ \\
17 & Yes & W2, L+, Sz, A \\
18 & Yes & W2, L+, C4 \\
19 & Yes & W2, L+, P, C2 \\
20 & Yes & W2, L+ \\
21 & Yes & L+, P, Sz, A, C2 \\
22 & Yes & T, A, D \\
23 & Yes & T, A, C2 \\
24 & Yes & T, A \\
25 & Yes & P, C2 \\
26 & Yes & T, A, C4 \\
27 & Yes & P, Sz, A, C2 \\
28 & Yes & T, A, Y, F \\  
29 & Yes & T, A \\
30 & Yes & T, A, C4, D \\
31 & Yes & P, C2 \\
32 & Yes & T, A \\
33 & Yes & Sz, A, C2 \\
34 & Yes & T, A, D \\
35 & Yes & Sz, A \\
36 & Yes & T \\
37 & Yes & P, C2 \\
38 & Yes & T, C4 \\
39 & Yes & T \\
40 & Yes & T \\
41 & Yes & P, Sz, A, C2 \\
42 & Yes & T, C4, D \\
43 & Yes & T, C2 \\
44 & Yes & T \\
45 & Yes & P, Sz, A, C2 \\
46 & Yes & T, C4 \\
47 & Yes & T \\
48 & Yes & T \\
49 & Yes & P, C2 \\
50 & Yes & T, C4 \\
51 & Yes & P, C2 \\
52 & Yes & T \\
53 & Yes & Sz, A \\
54 & Yes & T, C4, D \\
55 & Yes & P, C2 \\
56 & Yes & T \\
57 & Yes & P, Sz, A, C2 \\
58 & Yes & B \\  
\textbf{59} & \textbf{Yes} & \textbf{C2} \\  
60 & Yes & W3 \\
61 & Yes & P, Sz, A, C2 \\
\textbf{62} & \textbf{Yes} & \textbf{C4} \\  
63 & Yes & P, Sz, A, C2 \\
64 & No & -- \\
65 & No & -- \\
\textbf{66} & \textbf{Yes} & \textbf{C4, D} \\
67 & Yes & F, B \\  
68 & No & -- \\
69 & Yes & P, C2 \\
70 & Yes & D, Pr \\
71 & Yes & Sz, A \\
72 & Yes & W3 \\
73 & No & -- \\
\textbf{74} & \textbf{Yes} & \textbf{C4} \\  
75 & Yes & P, C2 \\
76 & No & -- \\
77 & Yes & Sz, A, C2 \\  
78 & No & -- \\
79 & Yes & P, C2 \\
80 & No & -- \\
81 & No & -- \\
\textbf{82} & \textbf{Yes} & \textbf{C4, D} \\
83 & Yes & Sz, A \\
84 & No & -- \\
85 & Yes & P, C2 \\
\textbf{86} & \textbf{Yes} & \textbf{C4, Y, F} \\  
87 & Yes & P, Sz, A, C2 \\
88 & No & -- \\
89 & No & -- \\
\textbf{90} & \textbf{Yes} & \textbf{C4, D} \\
91 & Yes & P, C2 \\
92 & No & -- \\
93 & Yes & U, C2 \\  
94 & No & -- \\
95 & Yes & Sz, A \\
96 & No & -- \\
97 & Yes & P, C2 \\
\textbf{98} & \textbf{Yes} & \textbf{C4} \\  
99 & Yes & P, C2 \\
100 & No & -- \\
101 & No & -- \\
\textbf{102} & \textbf{Yes} & \textbf{C4} \\
103 & No & -- \\
104 & No & -- \\
105 & Yes & Sz, A \\
\textbf{106} & \textbf{Yes} & \textbf{D} \\
107 & No & -- \\
108 & No & -- \\
109 & No & -- \\
\textbf{110} & \textbf{Yes} & \textbf{C4} \\  
111 & Yes & Sz, A \\
112 & No & -- \\
\textbf{113} & \textbf{Yes} & \textbf{C2} \\  
\textbf{114} & \textbf{Yes} & \textbf{C4, D} \\
115 & Yes & P, C2 \\
116 & No & -- \\
117 & Yes & P, Sz, A, C2 \\
\textbf{118} & \textbf{Yes} & \textbf{C4} \\  
119 & No & -- \\
120 & No & -- \\
121 & Yes & P, C2 \\
\textbf{122} & \textbf{Yes} & \textbf{C4, D} \\
123 & Yes & Sz, A \\
124 & No & -- \\
125 & Yes & Sz, A \\
\textbf{126} & \textbf{Yes} & \textbf{C4, D} \\
\textbf{127} & \textbf{Yes} & \textbf{Y, F} \\  
128 & No & -- \\
129 & Yes & P, C2 \\
130 & No & -- \\
131 & Yes & Sz, A \\
132 & No & -- \\
133 & No & -- \\
134 & No & -- \\
135 & Yes & P, C2 \\
136 & No & -- \\
137 & Yes & Sz, A \\
\textbf{138} & \textbf{Yes} & \textbf{C4} \\  
139 & Yes & P, C2 \\
140 & No & -- \\
141 & Yes & P, Sz, A, C2 \\
\textbf{142} & \textbf{Yes} & \textbf{D} \\
143 & Yes & Sz, A \\
144 & No & -- \\
145 & Yes & P, C2 \\
146 & No & -- \\
147 & Yes & P, Sz, A, C2 \\
148 & No & -- \\
149 & No & -- \\
\textbf{150} & \textbf{Yes} & \textbf{C4} \\  
151 & No & -- \\
152 & No & -- \\
153 & Yes & U \\
\textbf{154} & \textbf{Yes} & \textbf{C4, D} \\  
155 & Yes & Sz, A \\
156 & No & -- \\
157 & Yes & P, C2 \\
\textbf{158} & \textbf{Yes} & \textbf{C4} \\  
159 & Yes & P, C2 \\
160 & No & -- \\
161 & Yes & Sz, A \\
162 & No & -- \\
163 & No & -- \\
164 & No & -- \\
165 & Yes & Sz, A \\
\textbf{166} & \textbf{Yes} & \textbf{D} \\
167 & No & -- \\
168 & No & -- \\
169 & Yes & P, C2 \\
\textbf{170} & \textbf{Yes} & \textbf{C4} \\  
171 & Yes & Sz, A \\
172 & No & -- \\
173 & Yes & Sz, A \\
\textbf{174} & \textbf{Yes} & \textbf{C4, D} \\
175 & Yes & P, C2 \\
\textbf{176} & \textbf{Yes} & \textbf{Y} \\
177 & Yes & P, C2 \\
178 & No & -- \\
179 & No & -- \\
180 & No & -- \\
181 & Yes & P, C2 \\
\textbf{182} & \textbf{Yes} & \textbf{C4} \\  
183 & No & -- \\
184 & No & -- \\
185 & Yes & Sz, A \\
\textbf{186} & \textbf{Yes} & \textbf{C4} \\  
187 & Yes & P, C2 \\
188 & No & -- \\
189 & No & -- \\
\textbf{190} & \textbf{Yes} & \textbf{D} \\  
191 & No & -- \\
192 & No & -- \\
193 & No & -- \\
\textbf{194} & \textbf{Yes} & \textbf{C4} \\  
195 & Yes & P, C2 \\
196 & No & -- \\
197 & Yes & Sz, A \\
\textbf{198} & \textbf{Yes} & \textbf{C4} \\  
199 & Yes & P, C2 \\
200 & No & -- \\
\end{supertabular}
\end{center} 
\onecolumn                      
\endgroup

\end{document}